
\documentclass[final,leqno]{siamltex704}
\usepackage{epsfig}
\usepackage{amsmath}
\usepackage{amssymb}
\usepackage{graphicx}
\usepackage[notcite,notref]{showkeys}

\newtheorem{algorithm}{Weak Galerkin Algorithm}
\newcommand{\bq}{\begin{equation}}
\newcommand{\eq}{\end{equation}}

\setlength{\parskip}{1\parskip}

\def\bn{{\bf n}}
\def\bq{{\bf q}}

\def\3bar{{|\hspace{-.02in}|\hspace{-.02in}|}}

\begin{document}

\title{Weak Galerkin methods for second order elliptic interface problems}
\author{lin Mu\thanks{Department of
Applied Science, University of Arkansas at Little Rock,
Little Rock, AR 72204 (lxmu@ualr.edu).} \and Junping
Wang\thanks{Division of Mathematical Sciences, National Science
Foundation, Arlington, VA 22230 (jwang@\break nsf.gov). The research
of Wang was supported by the NSF IR/D program, while working at the
Foundation. However, any opinion, finding, and conclusions or
recommendations expressed in this material are those of the author
and do not necessarily reflect the views of the National Science
Foundation.} \and Guowei Wei\thanks{Department of Mathematics, Michigan State University, East Lansing, MI 48824 (wei@math.msu.edu).
This research of Wei was  supported in part by
National Science Foundation Grant  CCF-0936830 and  NIH Grant R01GM-090208.}
\and Xiu Ye\thanks{Department of Mathematics and
Statistics, University of Arkansas at Little Rock, Little Rock, AR
72204 (xxye@ualr.edu). This research of Ye was supported in part by
National Science Foundation Grant DMS-1115097.} \and Shan
Zhao\thanks{Department of Mathematics, University of Alabama,
Tuscaloosa, AL 35487 (szhao@bama.ua.edu). The research of Zhao was
supported in part by National Science Foundation Grant DMS-1016579.
}}

\maketitle

\begin{abstract}
Weak Galerkin methods refer to general finite element methods for
partial differential equations (PDEs)
in which differential operators are approximated by their weak
forms as distributions. Such weak forms give rise to desirable
flexibilities in enforcing boundary and interface conditions. A weak
Galerkin finite element method (WG-FEM) is developed in this paper
for solving elliptic PDEs with
discontinuous coefficients and interfaces. The paper also presents
many numerical tests for validating the WG-FEM for solving second
order elliptic interface problems. For such interface problems, the
solution possesses a certain singularity due to the nonsmoothness of
the interface. A challenge in research is to design high order
numerical methods that work well for problems with low regularity in
the solution. The best known numerical scheme in the literature is
of order $\mathcal{O}(h)$ for the solution itself in $L_\infty$
norm. It is demonstrated that the WG-FEM of lowest order is capable
of delivering numerical approximations that are of order
$\mathcal{O}(h^{1.75})$ in the usual $L_\infty$ norm for $C^1$ or
Lipschitz continuous interfaces associated with a $C^1$ or $H^2$
continuous solution. Theoretically, it is proved that high order of
numerical schemes can be designed by using the WG-FEM with
polynomials of high order on each element.
\end{abstract}

\begin{keywords}
Finite element methods,  weak Galerkin method, second order elliptic
interface problems, nonsmooth interface, low solution regularity.
\end{keywords}

\begin{AMS}
Primary, 65N15, 65N30, 76D07; Secondary, 35B45, 35J50
\end{AMS}
\pagestyle{myheadings}

\section{Introduction}

In this paper, we are concerned with interface problems for second
order elliptic partial differential equations (PDEs) with
discontinuous coefficients and singular sources. For simplicity,
consider the model problem of seeking functions $u=u(x,y)$ and
$v=v(x,y)$ satisfying
\begin{eqnarray} 
-\nabla\cdot A_1\nabla u&=&f_1, \quad\mbox{  in }\Omega_1,\label{equ}\\
-\nabla\cdot A_2\nabla v&=&f_2, \quad\mbox{  in }\Omega_2,\label{eqv}\\
u&=&g_1, \quad\mbox{  on }\partial\Omega_1\setminus\Gamma,\label{bc}\\
v&=&g_2, \quad\mbox{  on }\partial\Omega_2\setminus\Gamma,\label{bc1}\\ 
u-v&=&\phi, \quad\mbox{  on }\Gamma,\label{ic1}\\  
A_1\nabla u\cdot\bn_1+A_2\nabla v\cdot\bn_2&=&\psi, \quad\mbox{  on
}\Gamma,\label{ic2}
\end{eqnarray}
where  $\Omega=\Omega_1\cup\Omega_2$, $\Gamma=\Omega_1\cap\Omega_2$,
$\partial\Omega_1\setminus\Gamma\neq\{\emptyset\}$ and $\bn_1$ and
$\bn_2$ are outward normals of $\Omega_1$ and $\Omega_2$. Here,
$f_1$ and $f_2$ can be singular and  $\Gamma$ may be of Lipschitz
continuous. This problem is commonly referred to as
an elliptic interface problem and  occurs widely in practical applications, such as
fluid mechanics 
\cite{Layton:2009}, electromagnetic wave propagation
\cite{Hadley:2002,Hesthaven:2003,Zhao:2004,SZhao:2010a},
materials science \cite{Horikis:2006,Hou:1997}, and biological
science \cite{Yu:2007,Geng:2007a,DuanChen:2011a}.
The finite difference based solution of elliptic interface problems was pioneered
by  Peskin with his immersed boundary method (IBM) in 1977 \cite{Peskin:1977,Peskin:1989}. 
Mayo constructed an interesting integral equation approach to this class of problems \cite{Mayo:1984}.
To properly solve  the elliptic interface problem, one needs to enforce additional interface conditions
(\ref{ic1}) and  (\ref{ic2}).
 LeVeque and Li advanced the subject  with their second order sharp
interface scheme, the immersed interface method (IIM)  \cite{LeVeque:1994}.
In the past  decades, many other elegant methods have been proposed, 
including  the ghost fluid method (GFM) proposed by Fedkiw, Osher and coworkers \cite{Fedkiw:1999}, 
finite-volume-based methods \cite{Oevermann:2006},
the piecewise-polynomial discretization
\cite{TChen:2008},
and matched interface and boundary (MIB) method \cite{Zhao:2004,Zhou:2006c,Yu:2007a}.

A proof of second order convergence of the IIM for smooth interfaces
was due  to  Beale and Layton \cite{Beale:2006}. Rigorous
convergence analysis of most other finite difference based elliptic
interface schemes is not available yet. In general, it is quite
difficult to analyze the convergence  of  finite difference based
interface schemes because conventional techniques used in Galerkin
formulations  are not applicable for collocation schemes. The
analysis becomes particularly difficult when the designed elliptic
interface scheme is capable of dealing with nonsmooth interfaces
\cite{Yu:2007a}. At present, there is no rigorous convergence
analysis available for elliptic interface methods that deliveries
high-order accuracy for nonsmooth interfaces, to the author's
knowledge.

Finite element methods (FEMs)   are another class of important
approaches for elliptic interface problems. The construction of  FEM
solutions to elliptic interface problems dates back to 1970s
\cite{Babuska:1970}, and has been a subject of intensive
investigation in the past few decades
\cite{Ewing:1999,Ramiere:2008,XHe:2010,ZCai:2011}. Since the
elliptic interface problem defined in Eqs. (\ref{equ})-(\ref{ic2})
provides opportunities to construct new FEM schemes, a wide variety
of FEM approaches have been proposed in the literature. There are
two major classes of FEM based interface methods, namely,
interface-fitted FEMs and immersed FEMs, categorized according to
the topological relation between discrete elements and the
interface. In the interface-fitted FEMs, or body-fitted FEMs, the
finite element mesh is designed to align with the interface. Local
mesh refinement based  priori and/or posteriori error estimation can
be easily carried out. The performance of interface-fitted FEMs
depends on the quality of the element mesh near the interface as
well as the formulation of the problem. In fact, the construction of
high quality FEM meshes for real world complex interface geometries
is an active area of research. Bramble and King discussed a FEM for
nonhomogeneous second order elliptic interface problems on smooth
domains \cite{Bramble:1996}. One way to deal with embedded interface
conditions is to use  distributed Lagrange multipliers
\cite{Burman:2010}, In fact,  similar ideas have been widely used in
mortar methods and fictitious domain methods  for the treatment of
embedded boundaries \cite{Glowinski:1994}. A $Q_1$-nonconforming
finite element method was also proposed for elliptic interface
problems \cite{Ramiere:2008}. Discontinuous Galerkin (DG) FEMs have
been developed for elliptic equations with discontinuous
coefficients \cite{Dryjaa:2007,XHe:2010}.  Inherited from the
original DG method, DG based interface schemes have the flexibility
to implement interface jump conditions.

Immersed FEMs are also effective approaches for embedded interface
problems
\cite{Ewing:1999,XWang:2009,Hansbo:2002,Hou:2010,Harari:2010}. A key
feature of these approaches is that their element meshes are
independent of the interface geometry, i.e., the interface usually
cuts through elements.
As such, there is no need to use the unstructured mesh to body-fit
the interface, and simple structured Cartesian meshes can be
employed in immersed FEMs. Consequently, the time-consuming meshing
process is bypassed in immersed FEMs. However, to deal with complex
interface geometries, it is necessary to design appropriate
interface algorithms, which is similar to finite-difference based
elliptic interface methods. In fact, immersed FEMs can be regarded
as the Galerkin formulations of finite difference based interface
schemes.  It is not surprised that key ideas of many immersed FEMs
actually come from the corresponding finite-difference based
interface schemes. Additionally, other ideas in numerical analysis
have been utilized for the construction of immersed FEMs. For
example, a stabilized Lagrange multiplier method based on Nitsche
technique has been used to enforce interface jump constraints
\cite{Harari:2010}. The performance of immersed FEMs depends on the
design of elegant interface schemes for complex interface
geometries.

Convergence analysis of FEM based elliptic interface methods has been
considered by many researchers. Unlike the collocation formulation,
the Galerkin framework of FEMs allows more rigorous and robust
convergence analysis. Cai {\it el al} gave a proof of convergence
for a DG FEM for interface problems \cite{ZCai:2011}. Dryja {\it et
al} discussed the convergence of the DG discretization of Dirichlet
problems for second-order elliptic equations with discontinuous
coefficients in 2D \cite{Dryjaa:2007}. Hiptmair {\it et al}
presented a convergence analysis of $H(div;\Omega)$-elliptic
interface problems in general 3D Lipschitz domains with smooth
material interfaces \cite{Hiptmair:2010}. Recently, an edge-based
anisotropic mesh refinement algorithm has been analyzed and applied
to elliptic interface problems \cite{DWang:2010}.

Despite of numerous advancements in the numerical solution of
interface problems, there are still a few remaining challenges in
the field. One of these challenges concerns  the construction of
higher order  interface schemes. Currently, most interface schemes
are designed to be of second order convergence. However, higher
order methods are efficient and desirable for problems associated
with high frequency waves, such as electromagnetic  and acoustic
wave propagation and scattering, vibration analysis of engineering
structures, and shock-vortex interaction in compressible fluid
flows. It is easy to construct high order methods and even spectral
methods for these problems with straight interfaces in simple
domains. However, it is extremely difficult to obtain high order
convergence when the interface geometries are arbitrarily complex. A
fourth order MIB scheme has been developed for the Helmholtz
equation in media with arbitrarily curved interfaces in 2D
\cite{SZhao:2010a}. Up to sixth order MIB schemes have been
constructed for the Poisson equation with ellipsoidal interfaces in
3D \cite{Yu:2007a}. There are two standing open problems concerning
high order elliptic interface schemes, i.e., the construction of
fourth-order 3D interface schemes for arbitrarily complex interfaces
with sharp geometric singularities and the construction of
sixth-order 3D interface schemes for arbitrarily curved smooth
interfaces \cite{Yu:2007a}.

Another challenging issue in elliptic interface problems arises from
nonsmooth interfaces or interfaces with Lipschitz continuity
\cite{Yu:2007,Yu:2007c,KLXia:2011,Hou:2010}.  Nonsmooth interfaces
are also referred to as geometric singularities, such as sharp
edges, cusps and tips, which commonly occur in real-world
applications. It is a challenge to design high order interface
schemes for geometric singularities both numerically and
analytically.  The first known second order accurate scheme for
nonsmooth interfaces was constructed in 2007 \cite{Yu:2007c}. Since
then, many other interesting second order schemes have been
constructed for this class of problems in 2D
\cite{TChen:2008,Hou:2010,Bedrossian:2010} and 3D \cite{Yu:2007a}.
The second order MIB method for 3D elliptic PDEs with arbitrarily
non-smooth interfaces or geometric singularities has found its
success in protein electrostatic analysis
\cite{Yu:2007,Geng:2007a,DuanChen:2011a}. However, it appears truly
challenging to develop fourth order schemes for arbitrarily shaped
nonsmooth interfaces in 3D domains, although fourth order schemes
have been reported for a few special interface geometries
\cite{Yu:2007a}. Due to the need  in practical applications, further
effort in this direction is expected.

The above mentioned difficulties in the solution of  interface
problems with geometric singularities significantly deteriorate in
certain physical situations. It is well-known that the electric
field diverges near the geometric singularities, such as tips of
electrodes,  antenna and elliptic cones, and sharp edges of planar
conductors. Since electric field is related to the gradient of the
electrostatic potential, i.e., the solution of the Poisson equation
in the electrostatic analysis, it turns out that the solution of the
elliptic equation has a lower regularity, e.g., the gradient does
not exist at the geometric singularities.  For isolated
singularities, one can alleviate the difficulty by  introducing an
algebraic factor to solve a regularized equation whose solution has
a high regularity \cite{Yu:2007,KLXia:2011}. In this manner, second
order MIB schemes have been constructed in the past
\cite{Yu:2007,KLXia:2011}. However, it is  also desirable to
directly solve the original PDEs with a low solution regularity. The
FEM developed by Hou {\it et al} is of  first order convergence in
the solution and 0.7th order convergence in the gradient of the
solution when the solution of the Poisson equation is $C^1$ or $H^2$
continuous and the interface is Lipschitz continuous
\cite{Hou:2010}. In fact, due to the low solution regularity induced
by the nonsmooth interface, it is very difficult to analyze the
convergence of numerical schemes because commonly used techniques
may no longer be available.  Therefore, there is a pressing need to
develop both high order numerical methods and rigorous analysis of
numerical methods for elliptic interface problems with low solution
regularities induced by geometric singularities.

The objective of the present work is to construct a new numerical
method for elliptic equations with low solution regularities induced
by nonsmooth interfaces. To this end, we employ a newly developed
weak Galerkin finite element method (WG-FEM) by Wang and Ye
\cite{JWang:2011}. Like the discontinuous Galerkin (DG) methods,
WG-FEM makes use of discontinuous functions in the finite element
procedure which  endows WG-FEMs with high flexibility to deal with
geometric complexities and boundary conditions. For interface
problems, such a flexibility gives rise to robustness in the
enforcement of interface jump conditions. Unlike DG methods, WG-FEM
enforces only weak continuity of variables naturally through well
defined discrete differential operators. Therefore, weak Galerkin
methods avoid pending parameters resulted from the excessive
flexibility given to individual elements. As a consequence, WG-FEMs
are absolutely stable once properly constructed. Recently, WG-FEMs
have been applied to the solution of second-order elliptic equations
\cite{LMu:2011a}, and the solution of Helmholtz equations with large
wave numbers \cite{LMu:2011b}. A major advantage of the present weak
formulation is that it naturally enables WG-FEM to handle interface
problems with low solution regularities. We demonstrate that the
present WG-FEM of lowest order is able to achieve from $1.75$th
order to second order of convergence in the solution and about first
order of convergence in the gradient when the solution of the
elliptic equation is $C^1$ or $H^2$ continuous and the interface is
$C^1$ or Lipschitz continuous.

The rest of this paper is organized as follows. Section
\ref{Sec:theory} is devoted to a description of the method and
algorithm. We shall design a weak Galerkin formulation for the
elliptic interface problem given in Eqs. (\ref{equ})-(\ref{ic2}).
Section \ref{Sec:Convergence} is devoted to a convergence analysis
for the weak Galerkin scheme presented in Section \ref{Sec:theory}.
In Section \ref{Sec:Numerical}, we shall present some numerical
results for several test cases in order to demonstrate the
performance of the proposed WG-FEM for elliptic interface problems
in 2D. We first consider a few popular benchmark test examples with
smooth but complex interfaces. We then carry out some investigation
about nonsmooth interfaces. Some of these test problems admit
solutions with low regularities, for which our numerical results
significantly improve the best known result in literature and are
better than our theoretical prediction. This paper ends with a
conclusion, especially a remark on the use and possible advantages
of high order WG-FEMs.

\section{Methods and Algorithms}\label{Sec:theory}

Let ${\cal T}_h$ be a partition of the domain $\Omega$ with mesh
size $h$. We require that the edges of the elements in ${\cal T}_h$
align with the interface $\Gamma$. Thus, the partition ${\cal T}_h$
can be grouped into two sets of elements denoted by ${\cal
T}_h^1={\cal T}_h\cap\Omega_1$ and ${\cal T}_h^2={\cal
T}_h\cap\Omega_2$, respectively. Observe that ${\cal T}_h^j$
provides a finite element partition for the subdomain $\Omega_j,
j=1,2$. The intersection of the partition ${\cal T}_h$ also
introduce a finite element partition for the interface $\Gamma$,
which shall be denoted by $\Gamma_h$. For simplicity, we adopt the
following notation:
$$
(v,w)_K:=\int_K vwdK, \qquad \langle v,w\rangle_{\partial
K}=\int_{\partial K}vwds.
$$

For each triangle $K\in {\cal T}_h$, let $K^0$ and $\partial K$
denote the interior and boundary of $K$ respectively. Denote by
$P_j(K^0)$ the set of polynomials in $K^0$ with degree no more than
$j$, and $P_\ell(e)$ the set of polynomials on each segment (edge or
face) $e$, $e\in\partial K$ with degree no more than $\ell$. A
discrete function $w=\{w_0,w_b\}$ refers to a polynomial with two
components in which the first component $w_0$ is associated with the
interior $K_0$ and $w_b$ is defined on each edge or face $e$,
$e\in\partial K$. Please note that $w_b$ may or may not equal to
$w_0$ on $\partial K$. Now we introduce three trial finite element
spaces as follows
\begin{eqnarray}
U_h:=&&\left\{ w=\{w_0, w_b\}:\ \{w_0, w_b\}|_{K}\in P_j(K^0)\times P_\ell(e),\;e\in\partial K, \forall K\in {\cal T}_h^1 \right\},\label{uh}\\
V_h:=&&\left\{ \rho=\{\rho_0, \rho_b\}:\ \{\rho_0, \rho_b\}|_{K}\in P_j(K^0)\times P_\ell(e),\;e\in\partial K, \forall K\in {\cal T}_h^2 \right\},\label{vh}\\
\Lambda_h:=&&\left\{\mu:\; \mu|_e\in P_m(e),\;e\in
\Gamma_h\right\}.\label{lh}
\end{eqnarray}
Define two test spaces by
\[
U_h^0=\{w=\{w_0, w_b\}\in U_h:\;w_b|_e=0,\;\mbox{for}\;e\in
\partial\Omega_1\setminus\Gamma\}
\]
and
\[
V_h^0=\{\rho=\{\rho_0, \rho_b\}\in
V_h:\;\rho_b|_e=0,\;\mbox{for}\;e\in
\partial\Omega_2\setminus\Gamma\}.
\]
For each $w=\{w_0, w_b\}\in U_h$ or $V_h$, we define the discrete
gradient of $w$, denoted by $\nabla_dw\in V_r(K)$ on each element
$K$, by the following equation:
\begin{equation}\label{dwg}
\int_K \nabla_{d} w\cdot q dK = -\int_K w_0 (\nabla\cdot q) dK+
\int_{\partial K} w_b (q\cdot\bn) ds,\qquad \forall q\in V_r(K),
\end{equation}
where $V_r(K)$ is a subspace of the set
of vector-valued polynomials of degree no more than $r$ on $K$.

The selection of the indices $j$, $\ell$, $m$, and  $r$ is critical
in the design of weak Galerkin finite element methods. A detailed
discussion on the selection of those indices can be found in
\cite{JWang:2011}. In the present study of the interface problem, we
shall let $j=\ell=m=k\ge 0$ in (\ref{uh})-(\ref{lh}) and chose the
Raviart-Thomas element for $V_r(K):=RT_k(K)$. These elements are
referred as $\left\{P_k(K^0)^2,\;P_k(e)^2,\;P_k(\Gamma)\right\}$
element in our numerical experiments. An exploration of other
possible combinations is left to interested readers for future
research. Recall that the Raviart-Thomas element
\cite{Raviart-Thomas} $RT_k(K)$ of order $k$ is of the following
form
\[
RT_k(K)=P_k(K)^2+\tilde{P}_k(K){\bf x},
\]
where $\tilde{P}_k(K)$ is the set of homogeneous polynomials of
degree $k$ and ${\bf x}=(x_1,x_2)$.

\medskip

\begin{algorithm}
A numerical approximation for (\ref{equ})-(\ref{ic2}) can be
obtained by seeking $u_h=\{u_0,u_b\}\in U_h$ satisfying
$u_b= Q_b g_1$ on $\partial\Omega_1\setminus\Gamma$, $v_h=\{v_0,v_b\}\in V_h$ satisfying
$v_b= Q_b g_2$ on $\partial\Omega_2\setminus\Gamma$ and $\lambda_h\in\Lambda_h$  such that
\begin{eqnarray}
(A\nabla_d u_h,\nabla_d w)-\langle\lambda_h,\;w_b\rangle_{\Gamma}&&= (f_1, w_0),\quad\qquad\quad\quad\quad \forall w\in U_h^0\label{wg1}\\
(A\nabla_d v_h,\nabla_d \rho)+\langle\lambda_h,\;\rho_b\rangle_{\Gamma}&&=
(f_2, \rho_0)+\langle\psi,\;\rho_b\rangle_{\Gamma},\;\quad \forall \rho\in V_h^0\label{wg2}\\
\langle u_b-v_b,\;\mu\rangle_\Gamma &&=\langle\phi,\;\mu\rangle_\Gamma,\quad\quad\quad\quad\quad\quad\forall\mu\in\Lambda_h.\label{wg3}
\end{eqnarray}
Here $Q_bg_1$ and $Q_bg_2$ are the standard $L^2$ projection of the
Dirichlet boundary data in $P_k(e)$ for any edge/face $e\in
\partial\Omega$.
\end{algorithm}

\section{Convergence Theory}\label{Sec:Convergence}

The goal of this section is to provide a convergence theory for the
weak Galerkin finite element method as described in the previous
section. First, we show that the WG algorithm has one and only one
solution in the corresponding finite element trial space.

\begin{lemma}\label{unique}
The weak Galerkin finite element method (\ref{wg1})-(\ref{wg3}) has
a unique solution.
\end{lemma}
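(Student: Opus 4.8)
The plan is to prove uniqueness for this linear finite-dimensional system, which by standard linear algebra is equivalent to showing that the only solution of the homogeneous problem is zero. Since the total number of unknowns (the coefficients of $u_h$, $v_h$, and $\lambda_h$) equals the total number of equations in (\ref{wg1})--(\ref{wg3}), existence and uniqueness are equivalent, so it suffices to establish that the homogeneous system admits only the trivial solution. I would therefore set $f_1=f_2=0$, $g_1=g_2=0$, $\phi=0$, $\psi=0$ and seek to conclude $u_h=0$, $v_h=0$, $\lambda_h=0$.

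The key step is a clever choice of test functions. I would take $w=u_h\in U_h^0$ (legitimate since the homogeneous boundary data force $u_b=0$ on $\partial\Omega_1\setminus\Gamma$) in (\ref{wg1}) and $\rho=v_h\in V_h^0$ in (\ref{wg2}), then add the two resulting equations. The interface terms $-\langle\lambda_h,u_b\rangle_\Gamma$ and $+\langle\lambda_h,v_b\rangle_\Gamma$ combine into $\langle\lambda_h,\,v_b-u_b\rangle_\Gamma$. Now the homogeneous version of (\ref{wg3}) reads $\langle u_b-v_b,\mu\rangle_\Gamma=0$ for all $\mu\in\Lambda_h$; since $\lambda_h\in\Lambda_h$ is an admissible test function and $u_b-v_b\in P_k(e)$ matches the polynomial degree of $\Lambda_h$ on each interface edge, this forces the coupling term to vanish. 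What survives is simply
\begin{equation}
(A\nabla_d u_h,\nabla_d u_h)+(A\nabla_d v_h,\nabla_d v_h)=0.
\end{equation}
Using the uniform positivity of the coefficient matrix $A$ (bounded below by a positive constant on each subdomain), this yields $\nabla_d u_h=0$ and $\nabla_d v_h=0$ on every element $K$.

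The remaining, and slightly delicate, step is to pass from the vanishing of the discrete gradient to the vanishing of the discrete functions themselves. From the defining relation (\ref{dwg}), $\nabla_d u_h=0$ gives
\begin{equation}
\int_K u_0\,(\nabla\cdot q)\,dK=\int_{\partial K} u_b\,(q\cdot\bn)\,ds\qquad\forall q\in RT_k(K).
\end{equation}
A suitable choice of test vectors $q$ in the Raviart--Thomas space then shows first that $u_0$ is constant on each element and that $u_b$ agrees with this constant on each edge, and second, by propagating this equality across shared edges and invoking the homogeneous boundary condition $u_b=0$ on $\partial\Omega_1\setminus\Gamma$, that the common value is zero throughout $\Omega_1$; the identical argument gives $v_h=0$ in $\Omega_2$. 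I expect this connectivity argument to be the main obstacle, as it requires the properties of $RT_k(K)$ ensuring that $\nabla\cdot q$ spans $P_k(K^0)$ and $q\cdot\bn$ spans $P_k(e)$ on each edge, so that $\nabla_d u_h=0$ genuinely controls both components $\{u_0,u_b\}$. Finally, with $u_h=0$ and $v_h=0$ established, I would return to (\ref{wg1}) with an arbitrary $w\in U_h^0$ whose edge component $w_b$ is supported on $\Gamma$; this reduces (\ref{wg1}) to $\langle\lambda_h,w_b\rangle_\Gamma=0$ for all such $w_b\in P_k(e)$, which forces $\lambda_h=0$ and completes the proof.
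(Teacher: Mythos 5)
Your proof is correct and follows essentially the same route as the paper: reduce to the homogeneous problem, test with $w=u_h$ and $\rho=v_h$ and use the third equation to eliminate the interface coupling term, conclude $\nabla_d u_h=\nabla_d v_h=0$ and hence that $u_h,v_h$ are (piecewise, then globally) constant, invoke the Dirichlet condition on $\partial\Omega_1\setminus\Gamma$ and the interface continuity to get $u_h=v_h=0$, and finally test (\ref{wg1}) with $w_b=\lambda_h$ on $\Gamma$ to obtain $\lambda_h=0$. The only differences are cosmetic: you kill the coupling term by taking $\mu=\lambda_h$ in (\ref{wg3}) where the paper first derives $u_b=v_b$ via $\mu=u_b-v_b$, and you spell out the Raviart--Thomas degrees-of-freedom argument that the paper leaves implicit in the step from $\nabla_d u_h=0$ to $u_h$ constant.
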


\begin{proof}
It suffices to show that the solution of (\ref{wg1})-(\ref{wg3}) is
trivial when data is homogeneous; i.e.,
$f_1=f_2=g_1=g_2=\phi=\psi=0$. To this end, by letting $\mu=u_b-v_b$
in (\ref{wg3}) and then using the assumption of $\phi=0$ we obtain
$u_b=v_b$ on $\Gamma$. Next, by first letting $w=u_h$ in (\ref{wg1})
and $\rho=v_h$ in (\ref{wg2}) and then adding them up gives
\[
(A\nabla_d u_h,\nabla_d u_h)+(A\nabla_d v_h,\nabla_d v_h)=0,
\]
which implies $\nabla_du_h=0$ and $\nabla_dv_h=0$. Thus, both $u_h$
and $v_h$ are constants over $\Omega_1$ and $\Omega_2$,
respectively. It follows from the assumption of $u_b=0$ on the
non-empty set $\partial\Omega_1\setminus\Gamma$ that
$u_h=\{u_0,u_b\}=0$. Thus, we have $v_b=u_b=0$ on $\Gamma$, which
together with $\nabla_dv_h=0$ implies $v_h=0$. Finally, using
$\nabla_du_h=0$ and letting $w_b=\lambda_h$ on any edge/face
$e\in\Gamma_h$, we have from equation (\ref{wg1}) that
\[
\langle\lambda_h,\;\lambda_h\rangle_\Gamma=0,
\]
which implies $\lambda_h=0$.
\end{proof}

\medskip
Denote by $Q_h=\{Q_0,Q_b\}$ a local $L^2$ projection operator where:
\begin{eqnarray*}
Q_0:H^1(K^0)\rightarrow P_k(K^0),&&\quad
Q_b:H^{\frac12}(e)\rightarrow P_k(e),\;e\in\partial K
\end{eqnarray*}
are the usual $L^2$ projections into the corresponding spaces. Let
$\Pi_h$ be the usual projection operator in the mixed finite element
method such that $\Pi_h\bq\in H({\rm div},\Omega_i)$; and on each
$K\in {\cal T}_h^i$, one has $\Pi_h\bq \in RT_k(K)$ satisfying
$$
(\nabla\cdot\bq,\;w_0)_K=(\nabla\cdot\Pi_h\bq,\;w_0)_K, \qquad
\forall w_0\in P_k(K^0).
$$

\begin{lemma}
Let $\tau\in H({\rm div},\Omega_i)$ be a smooth vector-valued
function and $\Pi_h$ be the locally defined projection operator
commonly used in the mixed finite element method. Then, the
following identify holds true
\begin{equation}\label{key1}
\sum_{K\in {\cal T}_h^i}(-\nabla\cdot\tau, \;w_0)_K=\sum_{K\in {\cal
T}_h^i}(\Pi_h\tau, \;\nabla_d w)_K-\langle w_b,\; \tau\cdot
\bn\rangle_{\Gamma}
\end{equation}
for any $w=\{w_0,w_b\}\in V$ with $V=U_h^0$ or $V_h^0$ and $i=1,2$.
\end{lemma}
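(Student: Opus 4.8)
The plan is to prove the identity first on a single element and then to assemble the global statement by summation, with $\Pi_h\tau$ playing the role of a test function in the definition of the discrete gradient. Because we have chosen $r=k$, on each $K\in{\cal T}_h^i$ the restriction $\Pi_h\tau|_K\in RT_k(K)=V_r(K)$ is an admissible test vector in the defining relation (\ref{dwg}). Substituting $q=\Pi_h\tau$ there gives, on each element,
\[
(\nabla_d w,\;\Pi_h\tau)_K = -(w_0,\;\nabla\cdot\Pi_h\tau)_K + \langle w_b,\;\Pi_h\tau\cdot\bn\rangle_{\partial K}.
\]
This reduces everything to rewriting the two right-hand terms so that $\Pi_h\tau$ is replaced by $\tau$ wherever (\ref{key1}) requires it.

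For the volume term I would invoke the commuting property of the Raviart--Thomas projection recorded just above the lemma: since $w_0|_K\in P_k(K^0)$, one has $(\nabla\cdot\Pi_h\tau,\;w_0)_K=(\nabla\cdot\tau,\;w_0)_K$, so the first term becomes exactly the summand $(-\nabla\cdot\tau,\;w_0)_K$ on the left of (\ref{key1}). For the boundary term I would use the other classical defining property of the mixed-method projection $\Pi_h$, namely that its normal trace reproduces that of $\tau$ tested against polynomials of degree $k$ on each edge; since $w_b|_e\in P_k(e)$, this yields edge by edge $\langle w_b,\;\Pi_h\tau\cdot\bn\rangle_e=\langle w_b,\;\tau\cdot\bn\rangle_e$. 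Combining the two rewrites produces the local identity $(-\nabla\cdot\tau,w_0)_K=(\Pi_h\tau,\nabla_d w)_K-\langle w_b,\tau\cdot\bn\rangle_{\partial K}$.

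It then remains to sum over $K\in{\cal T}_h^i$ and to simplify the aggregate boundary sum $\sum_K\langle w_b,\tau\cdot\bn\rangle_{\partial K}$. I would split the edges into interior edges, boundary edges on $\partial\Omega_i\setminus\Gamma$, and edges on $\Gamma$. On an interior edge $w_b$ is single-valued while the two adjacent elements carry opposite outward normals, and since $\tau$ is smooth its normal component is continuous across that edge; hence the two contributions cancel. On the edges of $\partial\Omega_i\setminus\Gamma$ the assumption $w\in V=U_h^0$ or $V_h^0$ forces $w_b=0$, so those terms drop out. Only the $\Gamma$-edges survive, giving $\sum_K\langle w_b,\tau\cdot\bn\rangle_{\partial K}=\langle w_b,\tau\cdot\bn\rangle_{\Gamma}$, and inserting this into the summed local identity yields (\ref{key1}).

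The step requiring genuine care, rather than routine bookkeeping, is the edge accounting in the last paragraph: I must verify that the interior-edge cancellation actually rests on both the single-valuedness of $w_b$ and the continuity of the normal trace of $\tau$, and that the orientation left over on $\Gamma$ agrees with the outward normal $\bn$ used in the statement. The two projection properties (commuting divergence and normal-trace reproduction) are the standard defining relations of $RT_k$ and can simply be quoted, so the substance of the argument lies in organizing this boundary sum correctly.
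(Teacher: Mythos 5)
Your proposal is correct and follows essentially the same route as the paper: test the definition of $\nabla_d$ with $q=\Pi_h\tau$, use the commuting divergence property of $\Pi_h$, and reduce the assembled boundary sum to $\Gamma$ via interior-edge cancellation and the vanishing of $w_b$ on $\partial\Omega_i\setminus\Gamma$. The only (immaterial) difference is the order of operations: the paper cancels interior edges first using the continuity of $\Pi_h\tau\cdot\bn$ (i.e.\ $\Pi_h\tau\in H({\rm div},\Omega_i)$) and invokes the normal-trace reproduction of $\Pi_h$ only on $\Gamma$, whereas you replace $\Pi_h\tau\cdot\bn$ by $\tau\cdot\bn$ on every edge before cancelling using the continuity of $\tau\cdot\bn$ itself.
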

\begin{proof}
It follows from the definition of $\Pi_h$ and $\nabla_d$ that
\begin{eqnarray*}
\sum_{K\in {\cal T}_h^i}(-\nabla\cdot\tau, \;w_0)_K&=&\sum_{K\in {\cal T}_h^i}(-\nabla\cdot \Pi_h\tau, \;w_0)_K\\
&=&\sum_{K\in {\cal T}_h^i}\left((\Pi_h\tau,\; \nabla_d w)_K -
\langle w_b,\;
\Pi_h \tau\cdot \bn\rangle_{\partial K}\right). \\
\end{eqnarray*}
Now using the continuity of $\Pi_h\tau\cdot\bn$ across each interior
edge or face we arrive at
\begin{eqnarray*}
\sum_{K\in {\cal T}_h^i}(-\nabla\cdot\tau, \;w_0)_K &=&\sum_{K\in
{\cal T}_h^i}(\Pi_h \tau,\; \nabla_d w)_K-\langle w_b,\;
\Pi_h \tau\cdot \bn\rangle_{\partial\Omega_i}\\
&=&\sum_{K\in {\cal T}_h^i}(\Pi_h \tau,\; \nabla_d w)_K-\langle
w_b,\; \tau\cdot \bn\rangle_{\Gamma},
\end{eqnarray*}
where we have used the fact that $\langle w_b,\; \Pi_h\tau\cdot
\bn\rangle_{\Gamma} = \langle w_b,\; \tau\cdot \bn\rangle_{\Gamma}$
and $w_b=0$ on $\partial\Omega_i\backslash\Gamma$. This completes
the proof.
\end{proof}

\medskip
The following error estimates are straightforward from the
definition of $\Pi_h$ and $Q_h$. Readers can also find a
verification of the result in \cite{JWang:2011}.
\begin{lemma}
For $u\in H^{k+2}(\Omega_1)$ and $v\in H^{k+2}(\Omega_2)$ with $k\ge
0$, we have
\begin{eqnarray}
\|\Pi_h(A\nabla u)-A\nabla_d(Q_h u)\|&\le& Ch^{k+1}\|u\|_{k+2,\Omega_1},\label{approx}\\
\|\Pi_h(A\nabla v)-A\nabla_d(Q_h v)\|&\le&
Ch^{k+1}\|v\|_{k+2,\Omega_2}.\label{approx1}
\end{eqnarray}
\end{lemma}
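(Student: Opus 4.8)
The plan is to prove the bound by inserting the exact field $A\nabla u$ and splitting
\[
\Pi_h(A\nabla u)-A\nabla_d(Q_h u) = \bigl(\Pi_h(A\nabla u)-A\nabla u\bigr) + A\bigl(\nabla u - \nabla_d(Q_h u)\bigr),
\]
so that the triangle inequality reduces everything to two familiar approximation errors, each estimated element by element and then summed over $K\in\T_h^1$. The identical argument applied on $\T_h^2$ with $v$ in place of $u$ yields (\ref{approx1}), so I will only describe the case of $u$ on $\Omega_1$.

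For the first term I would invoke the standard approximation property of the mixed (Raviart--Thomas) projection $\Pi_h$: since $A=A_1$ is smooth (piecewise constant) on $\Omega_1$ and $u\in H^{k+2}(\Omega_1)$, the vector field $A\nabla u$ lies in $H^{k+1}(\Omega_1)$ with $\|A\nabla u\|_{k+1,\Omega_1}\le C\|u\|_{k+2,\Omega_1}$, and the $RT_k$ projection error on each element obeys $\|A\nabla u-\Pi_h(A\nabla u)\|_K\le Ch^{k+1}\|A\nabla u\|_{k+1,K}$. Summing the squares over $K$ gives the desired $Ch^{k+1}\|u\|_{k+2,\Omega_1}$ bound for this term; this part is entirely classical and is the content the excerpt defers to \cite{JWang:2011}.

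The heart of the matter is the second term, for which I would first establish that, on each $K$, the discrete gradient $\nabla_d(Q_h u)$ is nothing but the local $L^2$-projection of the true gradient $\nabla u$ onto $RT_k(K)$. To see this, set $w=Q_h u=\{Q_0 u,Q_b u\}$ in the defining relation (\ref{dwg}) and subtract the integration-by-parts identity $\int_K \nabla u\cdot q\,dK = -\int_K u(\nabla\cdot q)\,dK + \int_{\partial K} u(q\cdot\bn)\,ds$; this yields, for every $q\in RT_k(K)$,
\[
\int_K\bigl(\nabla_d(Q_h u)-\nabla u\bigr)\cdot q\,dK = -\int_K (Q_0u-u)(\nabla\cdot q)\,dK + \int_{\partial K}(Q_bu-u)(q\cdot\bn)\,ds.
\]
The decisive point is that the degree choices $j=\ell=m=k$ with $V_r(K)=RT_k(K)$ are exactly matched to the projection $Q_h$: for $q\in RT_k(K)$ one has $\nabla\cdot q\in P_k(K^0)$ and $q\cdot\bn|_e\in P_k(e)$, so both right-hand terms vanish because $Q_0$ and $Q_b$ are the $L^2$-projections onto $P_k(K^0)$ and $P_k(e)$. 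Hence $\nabla_d(Q_h u)$ coincides with the $L^2$-projection of $\nabla u$ onto $RT_k(K)$.

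Finishing is then routine. Since $P_k(K)^2\subseteq RT_k(K)$, this projection error is bounded by the best $P_k^2$-approximation of $\nabla u$, and a Bramble--Hilbert/scaling argument gives $\|\nabla u-\nabla_d(Q_h u)\|_K\le Ch^{k+1}\|\nabla u\|_{k+1,K}\le Ch^{k+1}\|u\|_{k+2,K}$. Multiplying by $\|A\|_\infty$ and summing over $K$ controls the second term, and combining with the first completes the estimate. I expect the only genuinely nontrivial step to be the verification that the polynomial degrees match so that the two consistency terms above vanish; this commuting property is precisely what forces the calibrated choice $j=\ell=m=k$ and $V_r=RT_k$, and once it is in hand the remainder is standard finite element approximation theory.
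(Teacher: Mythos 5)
Your proof is correct. The paper itself gives no argument for this lemma (it simply defers to \cite{JWang:2011}), and what you supply is precisely the standard argument from that reference: split off the Raviart--Thomas interpolation error $\Pi_h(A\nabla u)-A\nabla u$, and use the degree matching ($\nabla\cdot q\in P_k(K^0)$ and $q\cdot\bn|_e\in P_k(e)$ for $q\in RT_k(K)$) to show that $\nabla_d(Q_h u)$ is the local $L^2$ projection of $\nabla u$ onto $RT_k(K)$, after which both pieces are $O(h^{k+1})$ by classical approximation theory; I see no gaps.
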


It is well known that there exists a constant $C$ such that for any function $g\in
H^1(K)$
\begin{equation}\label{trace}
\|g\|_{e}^2 \leq C \left( h_K^{-1} \|g\|_K^2 + h_K
\|g\|_{1,K}^2\right),
\end{equation}
where $e$ is an edge of $K$.

\begin{theorem}
Let $(u_h, v_h,\lambda_h)\in U_h\times V_h\times \Lambda_h$ be the
solution arising from the weak Galerkin finite element scheme
(\ref{wg1})-(\ref{wg3}). Then, the following error estimates hold
true
\begin{eqnarray}
\|\nabla_d (Q_hu-u_h)\|+\|\nabla_d(Q_hv-v_h)\|&&\le Ch^{k+1}(\|u\|_{k+2}+\|v\|_{k+2}),\label{e-uv}\\
\|A\nabla u\cdot\bn-\lambda_h\|_\Gamma &&\le C
h^{k+\frac12}(\|u\|_{k+2}+\|v\|_{k+2}),\label{e-l}
\end{eqnarray}
where $C$ stands for a generic constant independent of the mesh size
$h$.
\end{theorem}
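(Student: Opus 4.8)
The plan is to run the standard projection-based energy argument used for mixed and weak Galerkin schemes. Write $e_u=Q_hu-u_h$ and $e_v=Q_hv-v_h$, and introduce the \emph{exact} interface flux $\lambda:=A\nabla u\cdot\bn_1$ on $\Gamma$, so that the flux condition (\ref{ic2}) reads $A\nabla v\cdot\bn_2=\psi-\lambda$. First I would apply the key identity (\ref{key1}) to $\tau=A\nabla u$ on $\T_h^1$ and to $\tau=A\nabla v$ on $\T_h^2$, use (\ref{equ})--(\ref{eqv}) to replace $-\nabla\cdot A\nabla u$ by $f_1$ and $-\nabla\cdot A\nabla v$ by $f_2$, and insert $A\nabla_d Q_hu$ and $A\nabla_d Q_hv$ in place of $\Pi_h(A\nabla u)$ and $\Pi_h(A\nabla v)$. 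This produces the equations satisfied by the projected exact solution; subtracting the scheme (\ref{wg1})--(\ref{wg2}) then gives, for all $w\in U_h^0$ and $\rho\in V_h^0$,
$$(A\nabla_d e_u,\nabla_d w)-\langle\lambda-\lambda_h,w_b\rangle_\Gamma=-\theta_u(w),\qquad (A\nabla_d e_v,\nabla_d\rho)+\langle\lambda-\lambda_h,\rho_b\rangle_\Gamma=-\theta_v(\rho),$$
where $\theta_u(w)=\sum_K(\Pi_h(A\nabla u)-A\nabla_d Q_hu,\nabla_d w)_K$ and $\theta_v$ is defined analogously. For the third relation I would use $u-v=\phi$ on $\Gamma$ together with $\langle Q_b\phi,\mu\rangle_\Gamma=\langle\phi,\mu\rangle_\Gamma$ for $\mu\in\Lambda_h$; subtracting (\ref{wg3}) yields $\langle e_{u,b}-e_{v,b},\mu\rangle_\Gamma=0$ for all $\mu\in\Lambda_h$.

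Next I would exploit the structure of the errors. Since $u_b=Q_bg_1$ on $\partial\Omega_1\setminus\Gamma$ we have $e_u\in U_h^0$, and likewise $e_v\in V_h^0$, so both are admissible test functions. On each $\Gamma$-edge the traces $e_{u,b}$ and $e_{v,b}$ are polynomials in $P_k$, hence $e_{u,b}-e_{v,b}\in\Lambda_h$; choosing $\mu=e_{u,b}-e_{v,b}$ in the third error equation forces $e_{u,b}=e_{v,b}$ on $\Gamma$. Taking $w=e_u$ and $\rho=e_v$ in the first two error equations and adding, the interface terms combine into $-\langle\lambda-\lambda_h,e_{u,b}-e_{v,b}\rangle_\Gamma$, which vanishes, leaving $(A\nabla_d e_u,\nabla_d e_u)+(A\nabla_d e_v,\nabla_d e_v)=-\theta_u(e_u)-\theta_v(e_v)$. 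Bounding the right-hand side by Cauchy--Schwarz and the approximation estimates (\ref{approx})--(\ref{approx1}), and invoking the uniform positive-definiteness of $A$, gives (\ref{e-uv}).

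For the flux estimate (\ref{e-l}) I would split $\lambda-\lambda_h=(\lambda-Q_b\lambda)+(Q_b\lambda-\lambda_h)$. The projection part $\lambda-Q_b\lambda$ is controlled by standard $L^2$ approximation of the flux trace $A\nabla u\cdot\bn\in H^{k+\frac12}(\Gamma)$ and is already of the required order $h^{k+\frac12}$. For the discrete part, which lies in $\Lambda_h$, I would return to the first error equation with the choice $w=\{0,w_b\}$, where $w_b=Q_b\lambda-\lambda_h$ on the $\Gamma$-edges and $w_b=0$ elsewhere. Since $w_b\in\Lambda_h$ on $\Gamma$, orthogonality removes the $\lambda-Q_b\lambda$ contribution and gives $\|Q_b\lambda-\lambda_h\|_\Gamma^2=(A\nabla_d e_u,\nabla_d w)+\theta_u(w)$.

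The technical heart, and the source of the half-power loss in (\ref{e-l}), is estimating $\|\nabla_d w\|$ for this boundary-supported test function. From the defining relation (\ref{dwg}) with $w_0=0$ and $q=\nabla_d w\in RT_k(K)$ one has $\|\nabla_d w\|_K^2=\langle w_b,\nabla_d w\cdot\bn\rangle_{\partial K}$; applying the trace inequality (\ref{trace}) together with an inverse inequality for the polynomial $\nabla_d w$ yields $\|\nabla_d w\cdot\bn\|_{\partial K}\le Ch_K^{-\frac12}\|\nabla_d w\|_K$, and hence $\|\nabla_d w\|\le Ch^{-\frac12}\|Q_b\lambda-\lambda_h\|_\Gamma$. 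Substituting this back, using (\ref{approx}) to bound $\theta_u(w)$ and the already-established bound (\ref{e-uv}) on $\|\nabla_d e_u\|$, produces $\|Q_b\lambda-\lambda_h\|_\Gamma\le Ch^{k+\frac12}(\|u\|_{k+2}+\|v\|_{k+2})$, and the triangle inequality closes (\ref{e-l}). I expect this scaling step to be the main obstacle: it must be carried out on the band of elements meeting $\Gamma$, and it is precisely what prevents the optimal $h^{k+1}$ rate from carrying over to the multiplier.
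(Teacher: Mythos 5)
Your proposal is correct and follows essentially the same route as the paper: the same use of the identity (\ref{key1}) to derive the three error equations, the same energy argument with the interface terms cancelling via the third equation, and the same boundary-supported test function $w=\{0,w_b\}$ with the $h^{-1/2}$ trace/inverse scaling for the multiplier estimate. The only (minor) difference is that you explicitly add the triangle-inequality split $\lambda-\lambda_h=(\lambda-Q_b\lambda)+(Q_b\lambda-\lambda_h)$, a step the paper's proof actually skips since it only bounds $\epsilon_u=Q_b(A\nabla u\cdot\bn)-\lambda_h$, so your version is if anything slightly more complete.
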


\begin{proof}
For simplicity, we assume that the coefficient $A$ is a constant
tensor on each element $K$. The proof can be extended to the general
case of non-constant tensor $A$ without any difficulty.

Now testing (\ref{equ}) with $w\in U_h^0$ and then using
(\ref{key1}) leads to
\[
(\Pi_h(A\nabla u),\;\nabla_d w)-\langle A\nabla u\cdot \bn,\;w_b\rangle_{\Gamma}=(f_1,\;w_0).
\]
Adding the term $(A\nabla_dQ_hu,\;\nabla_dw)$ to both sides of the
equation above, we obtain
\begin{eqnarray}
(A\nabla_dQ_hu,\;\nabla_dw)-\langle A\nabla u\cdot \bn,\;w_b\rangle_{\Gamma}&&=(f_1,\;w_0)\nonumber\\
&&-(\Pi_h(A\nabla u)-A\nabla_dQ_hu,\;\nabla_d w).\label{test1}
\end{eqnarray}
Similarly, on $\Omega_2$ we have for any $\rho\in V_h^0$ the
following relation
\begin{eqnarray}
(A\nabla_dQ_hv,\;\nabla_d\rho)-\langle A\nabla v\cdot \bn,\;\rho_b\rangle_{\Gamma}&&=(f_2,\;\rho_0)\nonumber\\
&&-(\Pi_h(A\nabla v)-A\nabla_dQ_hv,\;\nabla_d \rho). \label{test2}
\end{eqnarray}
Using the interface condition (\ref{ic2}), we from from
(\ref{test2}) that
\begin{eqnarray}
(A\nabla_dQ_hv,\;\nabla_d\rho)+\langle A\nabla u\cdot \bn,\;\rho_b\rangle_{\Gamma}&&
=(f_2,\;\rho_0)+\langle \psi,\;\rho_b\rangle_{\Gamma}\nonumber\\
&&-(\Pi_h(A\nabla v)-A\nabla_dQ_hv,\;\nabla_d \rho). \label{test2'}
\end{eqnarray}
Testing the interface condition (\ref{ic1}) by $\mu\in\Lambda_h$
implies
\begin{equation}\label{test3}
\langle Q_bu-Q_bv,\;\mu\rangle_\Gamma=\langle u-v,\;\mu\rangle_\Gamma=\langle \phi,\;\mu\rangle_\Gamma.
\end{equation}
For simplicity, we introduce the following notations to represent
the errors:
\begin{eqnarray*}
e_h^u\equiv\{e_0^u,\;e_b^u\}:&=&\{Q_0u-u_0,Q_bu-u_b\},\\
e_h^v\equiv\{e_0^v,\;e_b^v\}:&=&\{Q_0v-v_0,Q_bv-v_b\},\\
\epsilon_u:&=&Q_b(A\nabla u\cdot\bn)-\lambda_h.
\end{eqnarray*}
Thus, the difference of (\ref{test1}) and (\ref{wg1}) gives
\begin{eqnarray}
(A\nabla_de_h^u,\;\nabla_dw)-\langle \epsilon_u,\;w_b\rangle_{\Gamma}= -(\Pi_h(A\nabla u)-A\nabla_dQ_hu,\;\nabla_d w).\label{test11}
\end{eqnarray}
The difference of (\ref{test2'}) and (\ref{wg2}) gives
\begin{eqnarray}
(A\nabla_de_h^v,\;\nabla_d\rho)+\langle \epsilon_u,\;\rho_b\rangle_{\Gamma}= -(\Pi_h(A\nabla v)-A\nabla_dQ_hv,\;\nabla_d \rho).\label{test22}
\end{eqnarray}
The difference of (\ref{test3}) and (\ref{wg3}) gives
\begin{equation}\label{test33}
\langle e_b^u-e_b^v,\;\mu\rangle_\Gamma=0,\qquad\forall \mu\in
\Lambda_h.
\end{equation}
First letting $w=e_h^u$ and $\rho=e_h^v$ in (\ref{test11}) and
(\ref{test22}) and then adding them up gives
\begin{eqnarray*}
\|A^{\frac12}\nabla_de_h^u\|^2&&+\|A^{\frac12}\nabla_de_h^v\|^2=-(\Pi_h(A\nabla u)-A\nabla_dQ_hu,\;\nabla_d e_h^u)\\
&&-(\Pi_h(A\nabla v)-A\nabla_dQ_hv,\;\nabla_d e_h^v).
\end{eqnarray*}
Using (\ref{approx})-(\ref{approx1}), we have from the above
equation that
\begin{eqnarray*}
C(\|\nabla_de_h^u\|+\|\nabla_de_h^v\|)^2&&\le\|A^{\frac12}\nabla_de_h^u\|^2+\|A^{\frac12}\nabla_de_h^v\|^2\\
&&\le \|\Pi_h(A\nabla u)-A\nabla_dQ_hu\|\ \|\nabla_de_h^u\|\|\\
&& \ +\|\Pi_h(A\nabla v)-A\nabla_dQ_hv\|\ \|\nabla_de_h^v\|\\
&&\le Ch^{k+1}(\|u\|_{k+2}+\|v\|_{k+2}) (\|\nabla_de_h^u\|+\|\nabla_de_h^v\|),
\end{eqnarray*}
which implies
\begin{equation}\label{error1}
\|\nabla_de_h^u\|+\|\nabla_de_h^v\|\le Ch^{k+1}(\|u\|_{k+2}+\|v\|_{k+2}).
\end{equation}
Using (\ref{trace}) and the inverse inequality, we have for any edge
or face $e\subset\Gamma\cap\partial K$
\begin{eqnarray}
\|A\nabla_de_h^u\cdot\bn\|^2_e&&\le Ch^{-1}\|\nabla_de_h^u\|_K^2,\label{m1}\\
\|(\Pi_h(A\nabla u)-A\nabla_dQ_hu)\cdot\bn\|^2_e&&\le Ch^{-1}\|\Pi_h(A\nabla u)-A\nabla_dQ_hu\|_K^2.\label{m2}
\end{eqnarray}
Choosing $w=\{w_0,w_b\}$ in equation (\ref{test11}) such that $w_0=0$ for all $K\in {\cal T}_h^1$ and $w_b=\epsilon_u$ for $e\in\Gamma$ and $w_b=0$ otherwise  yields
\[
\|\epsilon_u\|^2_\Gamma=(A\nabla_de_h^u,\;\nabla_dw)+(\Pi_h(A\nabla u)-A\nabla_dQ_hu,\;\nabla_d w).
\]
Using (\ref{dwg}), the equation above becomes
\begin{eqnarray*}
\|\epsilon_u\|^2_\Gamma=\sum_{e\in\Gamma}(\langle A\nabla_de_h^u\cdot\bn,\;\epsilon_u\rangle_e+ \langle (\Pi_h(A\nabla u)-A\nabla_dQ_hu)\cdot\bn,\;\epsilon_u\rangle_e).
\end{eqnarray*}
Using (\ref{m1}), (\ref{m2}), (\ref{approx}) and (\ref{e-uv}), we
have
\begin{eqnarray*}
\|\epsilon_u\|^2_\Gamma &&\le C\sum_{e\in\Gamma}(\| A\nabla_de_h^u\cdot\bn\|_e\|\epsilon_u\|_e+ \|(\Pi_h(A\nabla u)-A\nabla_dQ_hu)\cdot\bn\|_e\|\epsilon_u\|_e)\\
&&\le C\left(\sum_{K\in {\cal T}_h^1}h^{-1}\|\nabla_de_h^u\|_K^2+
\sum_{K\in {\cal T}_h^1}h^{-1} \|\Pi_h(A\nabla u)-A\nabla_dQ_hu\|_K^2\right)^{1/2}\|\epsilon_u\|_\Gamma \\
&&\le Ch^{k+\frac12}(\|u\|_{k+2}+\|v\|_{k+2})\|\epsilon_u\|_\Gamma,
\end{eqnarray*}
which verifies the error estimate (\ref{e-l}). This completes the
proof of the theorem.
\end{proof}

\section{Numerical experiments}\label{Sec:Numerical}

The goal of this section is to numerically validate the proposed WG
algorithm by solving some benchmark elliptic interface problems for
which analytical solutions are known. To fully demonstrate the
accuracy and robustness of the WG method, we consider challenging
problems involving Lipschitz continuous interfaces, highly
oscillated solutions, and solutions with low regularities. For
simplicity, we use the piecewise constant finite element
$P_0(\partial K)-P_0(K)-RT_0(K)$ on structured triangular meshes in
all test cases. The mesh generation and computation are all
conducted in the MATLAB environment. Since the analytical solutions
are known for each test case, the nonhomogeneous terms $f_1$ and
$f_2$ of the elliptic equations and the Dirichlet boundary data can
be correspondingly derived. Moreover, the two interface jump
conditions for the solution and the flux across the interface are
calculated according to the given analytical solution. Numerical
errors in the solution and its gradient are reported in $L_\infty$
norms in all examples.

\begin{table}[!hb]
\caption{Numerical convergence test for Example 1.}
\label{tab.ex1}
\begin{center}
\begin{tabular}{||c||c|cc|cc||}
\hline\hline
Mesh & $\max \{h\}$ & \multicolumn{2}{c}{Solution}  & \multicolumn{2}{c||}{Gradient}   \\
\cline{3-4} \cline{5-6}
 & & $L_\infty$ error & order & $L_\infty$ error & order \\
\hline\hline
Level 1 &2.8553e-01   &1.0266e-02 &        &1.3042e-02 & \\ \hline
Level 2 &1.5110e-01   &2.9631e-03 & 1.9525 &6.5234e-03 & 1.0886 \\ \hline
Level 3 &7.7543e-02   &7.6763e-04 & 2.0247 &3.2615e-03 & 1.0391 \\ \hline
Level 4 &3.9258e-02   &1.9518e-04 & 2.0118 &1.6306e-03 & 1.0185 \\ \hline
Level 5 &1.9749e-02   &4.9198e-05 & 2.0058 &8.1523e-04 & 1.0090 \\
\hline\hline
\end{tabular}
\end{center}
\end{table}

\begin{figure}[!tb]
\centering
\begin{tabular}{cc}
  \resizebox{2.45in}{2.1in}{\includegraphics{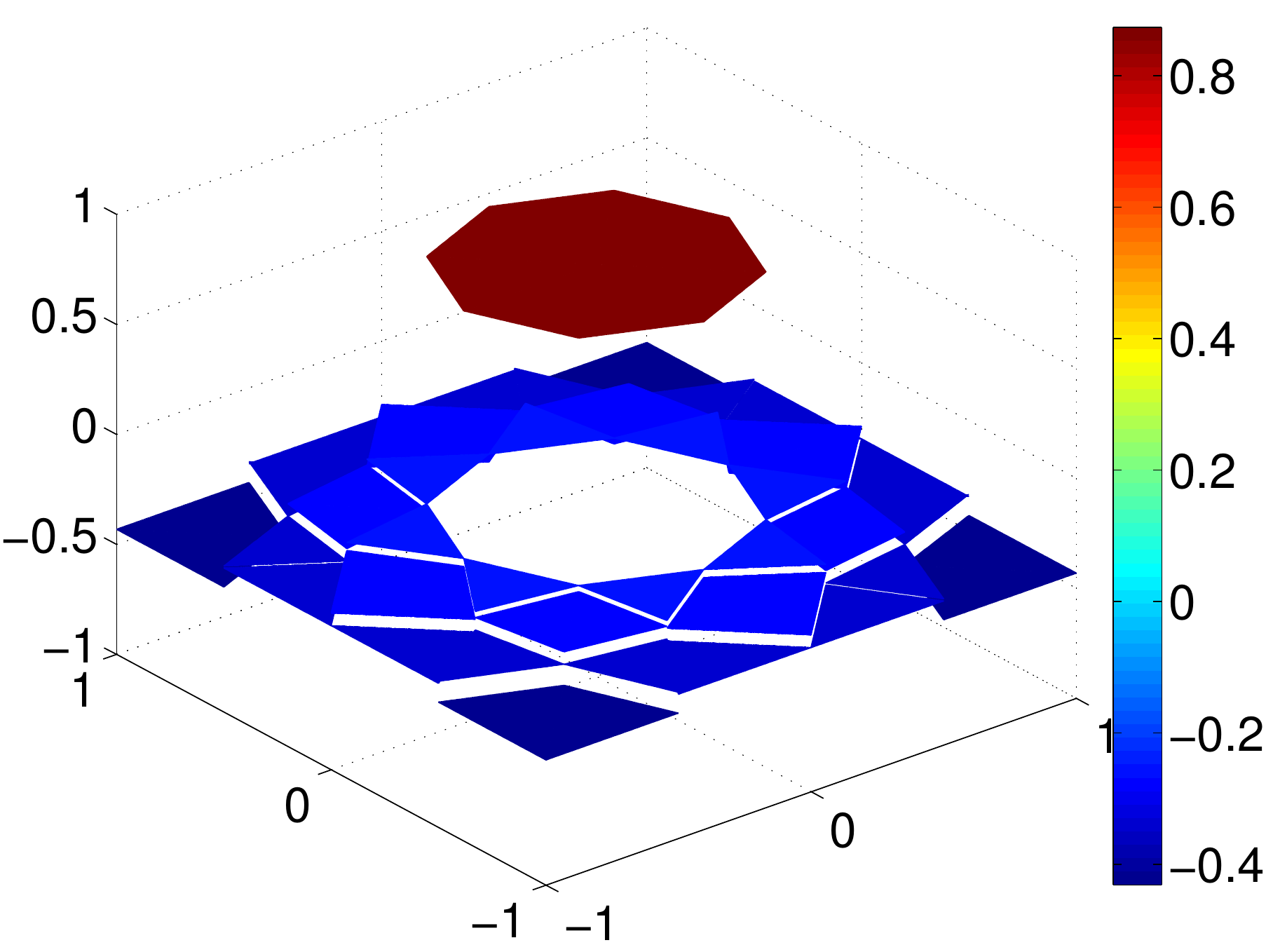}} \quad
  \resizebox{2.45in}{2.1in}{\includegraphics{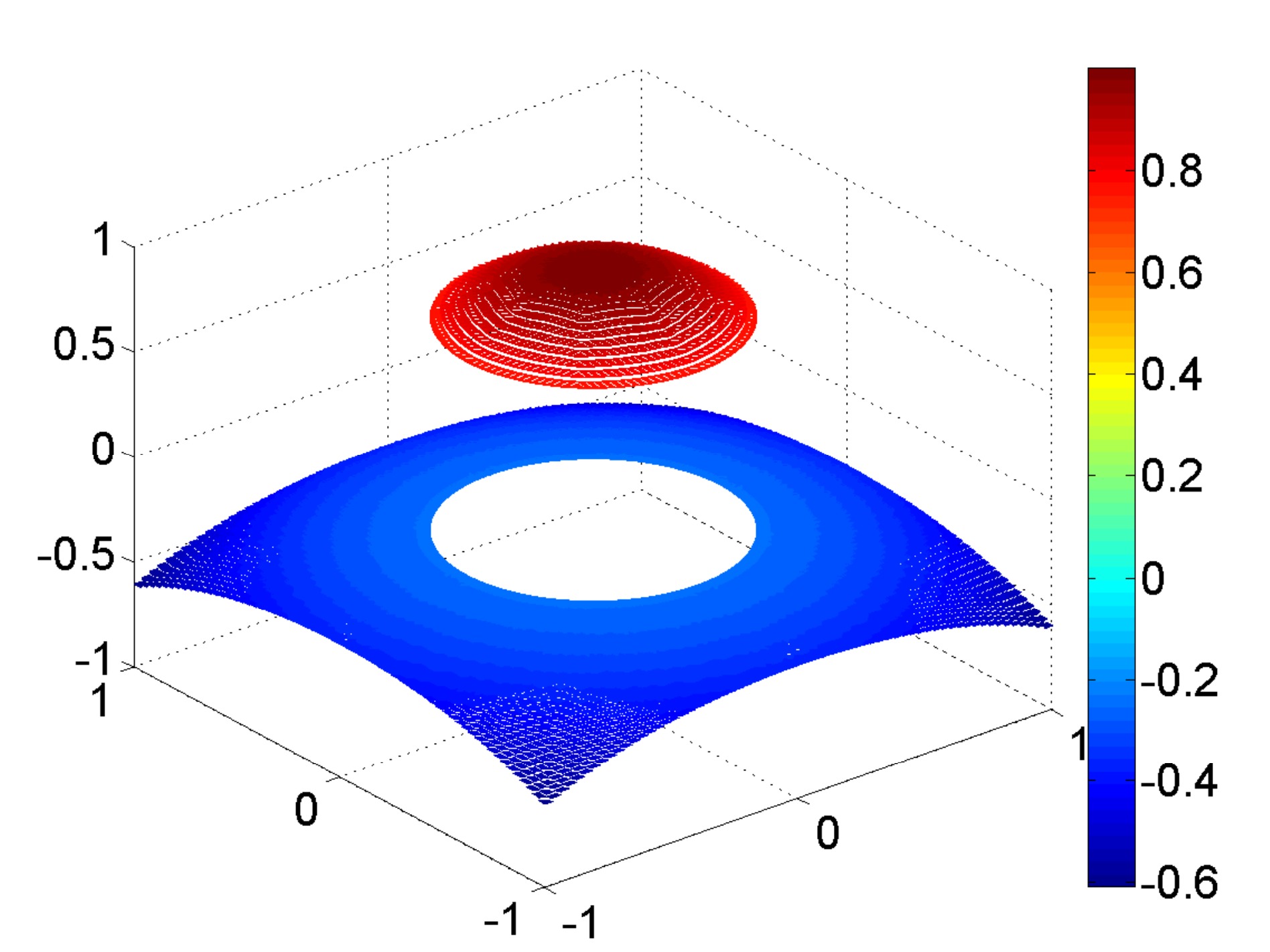}}
\end{tabular}
\caption{WG solutions of Example 1. Left: Mesh level 1; Right: mesh level 5.}
\label{fig.ex1}
\end{figure}

{\bf Example 1.} We first study a classical circular interface
problem \cite{Zhou:2006d}. Consider a square domain
$[-1,1]\times[-1,1]$ with a circular interface
$r^2=x^2+y^2=\frac{1}{4}$. The coefficient $A$ is defined to be
$A_1=b$ and $A_2=2$, respectively on each subdomain, for $r>0.5$ and
$r \le 0.5$. The analytical solution to the elliptic equation is
given as
\begin{align*}
u(x,y) &= -\left[\frac{1}{4}\left(1-\frac{1}{8b}-\frac{1}{b}\right)+\left(\frac{r^4}{2}+r^2\right)\right]/b
& r > 0.5 \\
v(x,y) & = -(x^2+y^2-1) &  r \le 0.5.
\end{align*}
By choosing $b=10$, the function and flux jumps
are actually constants \cite{Zhou:2006d}.

By using a uniform triangular mesh, the $L_\infty$ error for the
solution and its gradient of the WG method is reported in Table
\ref{tab.ex1}. Based on successive mesh refinements, the numerically
detected convergence rates are also reported for both error
measurements. It can be seen that the orders of convergence in
$L_\infty$ norm for the solution and gradient are, respectively, two
and one for piecewise constant WG finite element approximations.
This agrees with our theoretical results.

In Fig. \ref{fig.ex1}, the WG solutions based on mesh levels 1 and 5
are depicted. By using piecewise constant finite elements, the WG
solution at mesh level 1 clearly consists of piecewise constants. On
the other hand, a much better numerical solution is attained at mesh
level 5. It is also seen that the function and flux jumps are
constant across the circular interface.

\begin{table}[!hb]
\caption{Numerical convergence test for Example 2 with $\kappa=2$.}
\label{tab.ex2k2}
\begin{center}
\begin{tabular}{||c||c|cc|cc||}
\hline\hline
Mesh & $\max \{h\}$ & \multicolumn{2}{c}{Solution}  & \multicolumn{2}{c||}{Gradient}   \\
\cline{3-4} \cline{5-6}
 & & $L_\infty$ error & order & $L_\infty$ error & order \\
\hline\hline
Level 1 &3.1778e-01  &7.6628e-02 &        &1.5852e-01 & \\ \hline
Level 2 &1.5889e-01  &1.5119e-02 & 2.3415 &5.2258e-02 & 1.6009 \\ \hline
Level 3 &7.9444e-02  &3.6142e-03 & 2.0646 &2.0731e-02 & 1.3338 \\ \hline
Level 4 &3.9722e-02  &8.9375e-04 & 2.0157 &9.6505e-03 & 1.1031 \\ \hline
Level 5 &1.9861e-02  &2.2492e-04 & 1.9905 &4.7340e-03 & 1.0275 \\
\hline\hline
\end{tabular}
\end{center}
\end{table}

\begin{table}[!hb]
\caption{Numerical convergence test for Example 2 with $\kappa=8$.}
\label{tab.ex2k8}
\begin{center}
\begin{tabular}{||c||c|cc|cc||}
\hline\hline
Mesh & $\max \{h\}$ & \multicolumn{2}{c}{Solution}  & \multicolumn{2}{c||}{Gradient}   \\
\cline{3-4} \cline{5-6}
 & & $L_\infty$ error & order & $L_\infty$ error & order \\
\hline\hline
Level 1 & 1.5837e-01 &  8.6774e-01 &              & 5.2751e+00 &  \\ \hline
Level 2 & 7.9186e-02 &  1.4156e-01 & 2.6159  & 9.1695e-01  & 2.5243 \\ \hline
Level 3 & 3.9593e-02 &  2.3768e-02 & 2.5743  & 2.8679e-01  & 1.6768 \\ \hline
Level 4 & 1.9796e-02 &  5.8719e-03 & 2.0170  & 1.0732e-01  & 1.4180 \\ \hline
Level 5 & 9.8982e-03 &  1.4717e-03 & 1.9964  & 4.7255e-02  & 1.1834 \\
\hline\hline
\end{tabular}
\end{center}
\end{table}

\begin{figure}[!hb]
\centering
\begin{tabular}{cc}
  \resizebox{2.45in}{2.1in}{\includegraphics{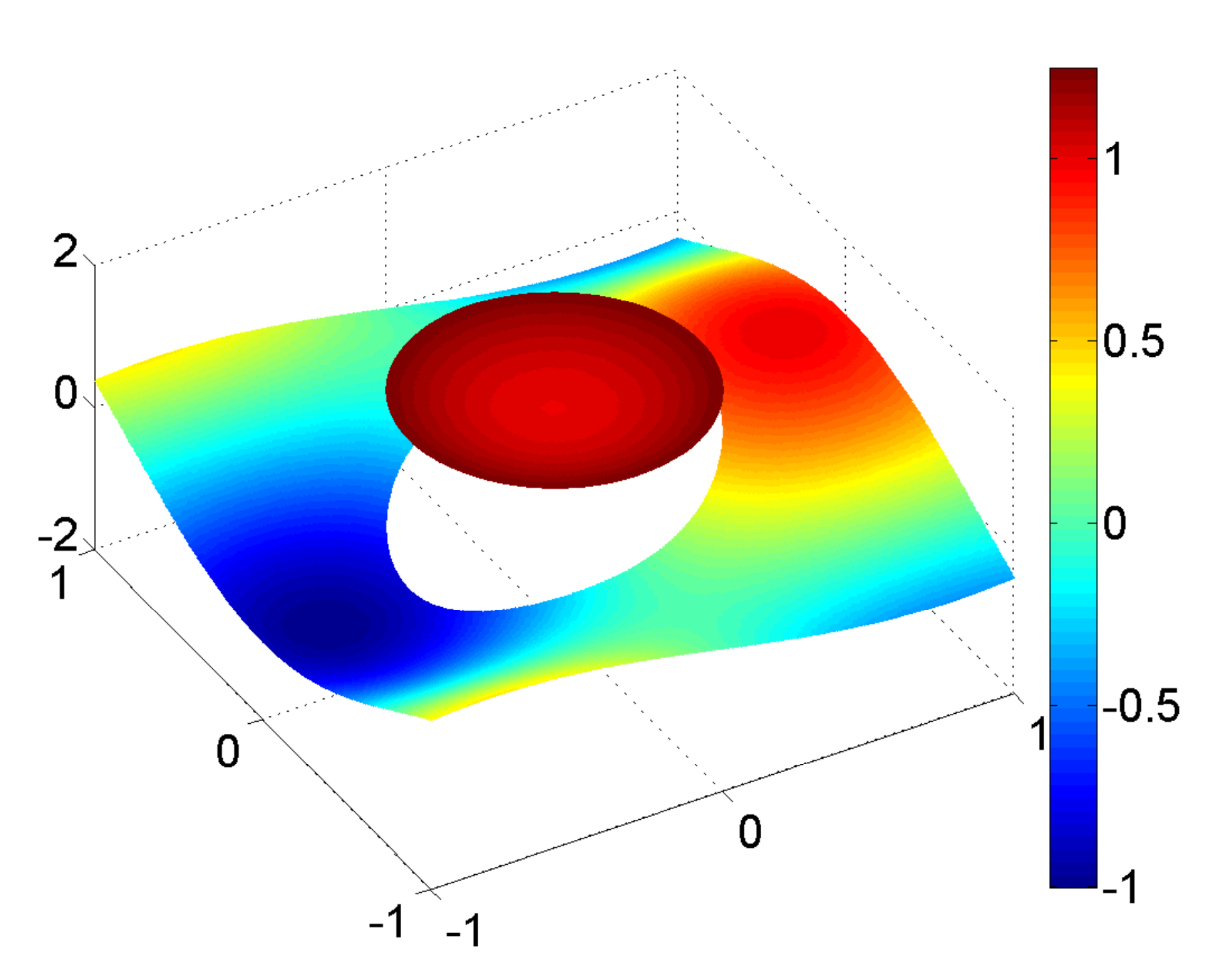}} \quad
  \resizebox{2.45in}{2.1in}{\includegraphics{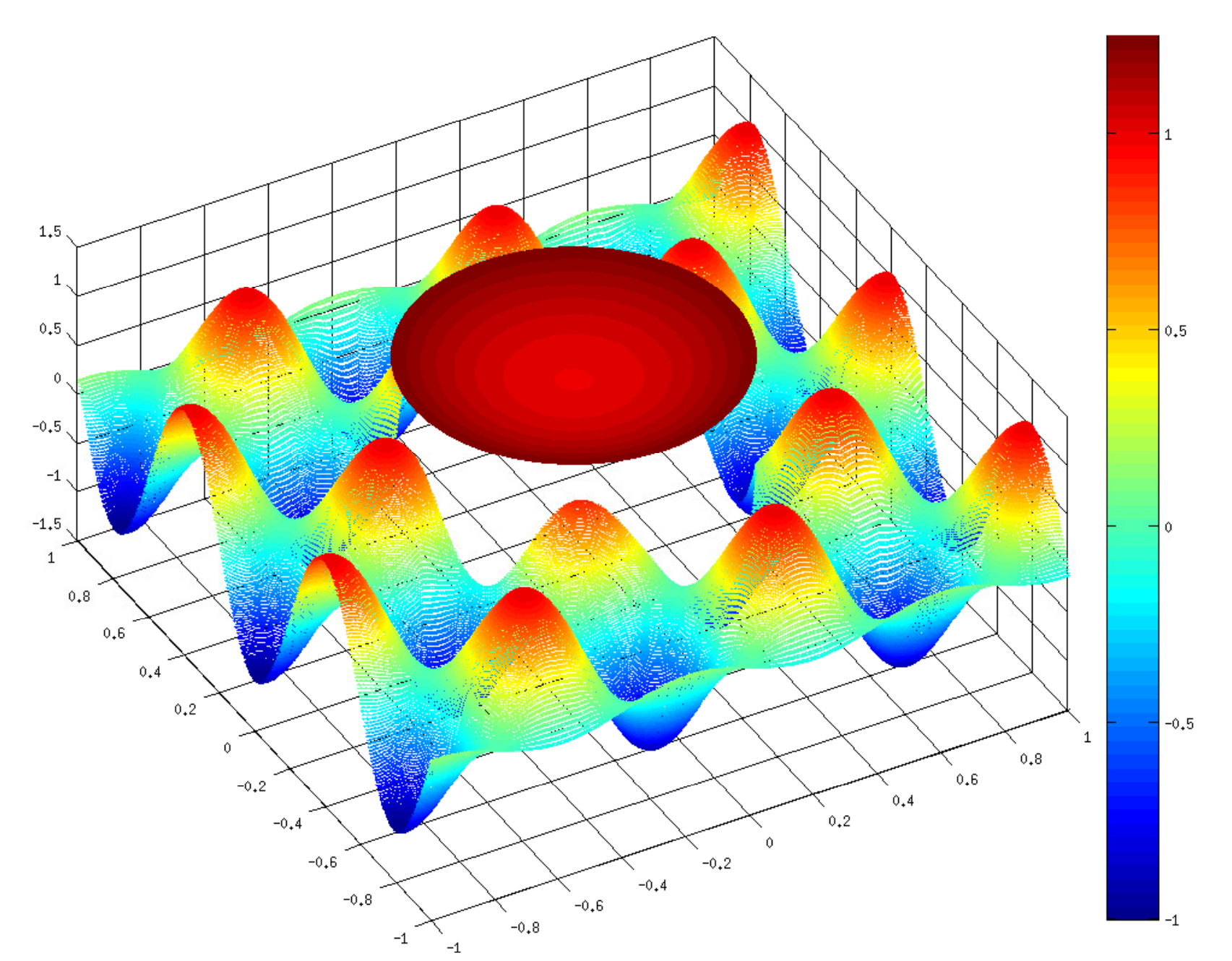}}
\end{tabular}
\caption{WG solutions at mesh level 5 for Example 2.
Left: $\kappa=2$; Right: $\kappa=8$.}
\label{fig.ex2}
\end{figure}

{\bf Example 2.} To further explore the potential of the proposed WG
method, we consider a circular interface problem with highly
oscillatory solutions. The high frequency or short wave solutions
are commonly encountered in solving electromagnetic wave propagation
and scattering problems governed by time domain Maxwell's equations
or frequency domain Helmholtz equations. Here, we consider the
following Helmholtz interface problem \cite{Zhou:2006d}
\begin{align}
-\nabla\cdot (A_1\nabla u) -k_1^2 u &=f_1 &  r > 0.5, \\
-\nabla\cdot (A_2\nabla v) -k_2^2 v &=f_2 &  r \le 0.5,
\end{align}
where the interface is still $r^2=\frac{1}{4}$ over the domain
$[-1,1]\times[-1,1]$. The wavenumber $k_{i}=\kappa \sigma_{i}$
($i=1,2$) depends on the wavenumber $\kappa$ of the wave solution in
free space and dielectric profiles $\sigma_{i}$. In the present
example, both $A$ and $\sigma_i$ are chosen to be discontinuous
across the interface. In particular $A_1=1$ and $\sigma_1=\sqrt{10}$
for $r >0.5$ and $A_2=10$ and $\sigma_2=1$ for $r \le 0.5$. The
analytical solution is given as
\begin{align*}
u(x,y) & = -\sin(\kappa x)\cos(\kappa y) &  r > 0.5, \\
v(x,y) & = -(x^2+y^2) & r \le 0.5.
\end{align*}
Other necessary interface jump conditions can be derived from the analytical
solution.

The WG solution of the Helmholtz equation with high wavenumbers has
been explored in \cite{LMu:2011b}. For the present Helmholtz
interface problem, a WG algorithm can be similarly constructed based
on equations (\ref{wg1}) - (\ref{wg3}). Two frequencies are tested,
i.e., $\kappa=2$ and $\kappa=8$, and the results are reported in
Tables \ref{tab.ex2k2} and \ref{tab.ex2k8}. By using $\kappa=8$, the
solution is of high frequency, see Fig. \ref{fig.ex2}. In comparison
with the numerical accuracy of the WG method for both cases, the WG
errors for $\kappa=8$ are larger even though a finer mesh is
employed in the same mesh level. Nevertheless, when we examine the
convergence rates, the WG method maintains the same rates, i.e.,
second and first orders, respectively, for the solution and its
gradient, in both cases. This verifies the capability of the WG
method in resolving short wave solutions.

\begin{table}[!hb]
\caption{Numerical convergence test for Example 3 with $b=10$.}
\label{tab.ex3b10}
\begin{center}
\begin{tabular}{||c||c|cc|cc||}
\hline\hline
Mesh & $\max \{h\}$ & \multicolumn{2}{c}{Solution}  & \multicolumn{2}{c||}{Gradient}   \\
\cline{3-4} \cline{5-6}
 & & $L_\infty$ error & order & $L_\infty$ error & order \\
\hline\hline
Level 1 &5.6522e-01  &1.4339e-01 &        &1.5956e-01 & \\ \hline
Level 2 &2.8261e-01  &2.6979e-02 & 2.4100 &9.1557e-02 & 0.8014 \\ \hline
Level 3 &1.4130e-01  &7.3332e-03 & 1.8792 &5.8167e-02 & 0.6544 \\ \hline
Level 4 &7.0652e-02  &1.4904e-03 & 2.2988 &2.9623e-02 & 0.9735 \\ \hline
Level 5 &3.5326e-02  &2.8124e-04 & 2.4058 &1.5250e-02 & 0.9579 \\
\hline\hline
\end{tabular}
\end{center}
\end{table}

\begin{figure}[!hb]
\centering
\begin{tabular}{cc}
  \resizebox{2.45in}{2.1in}{\includegraphics{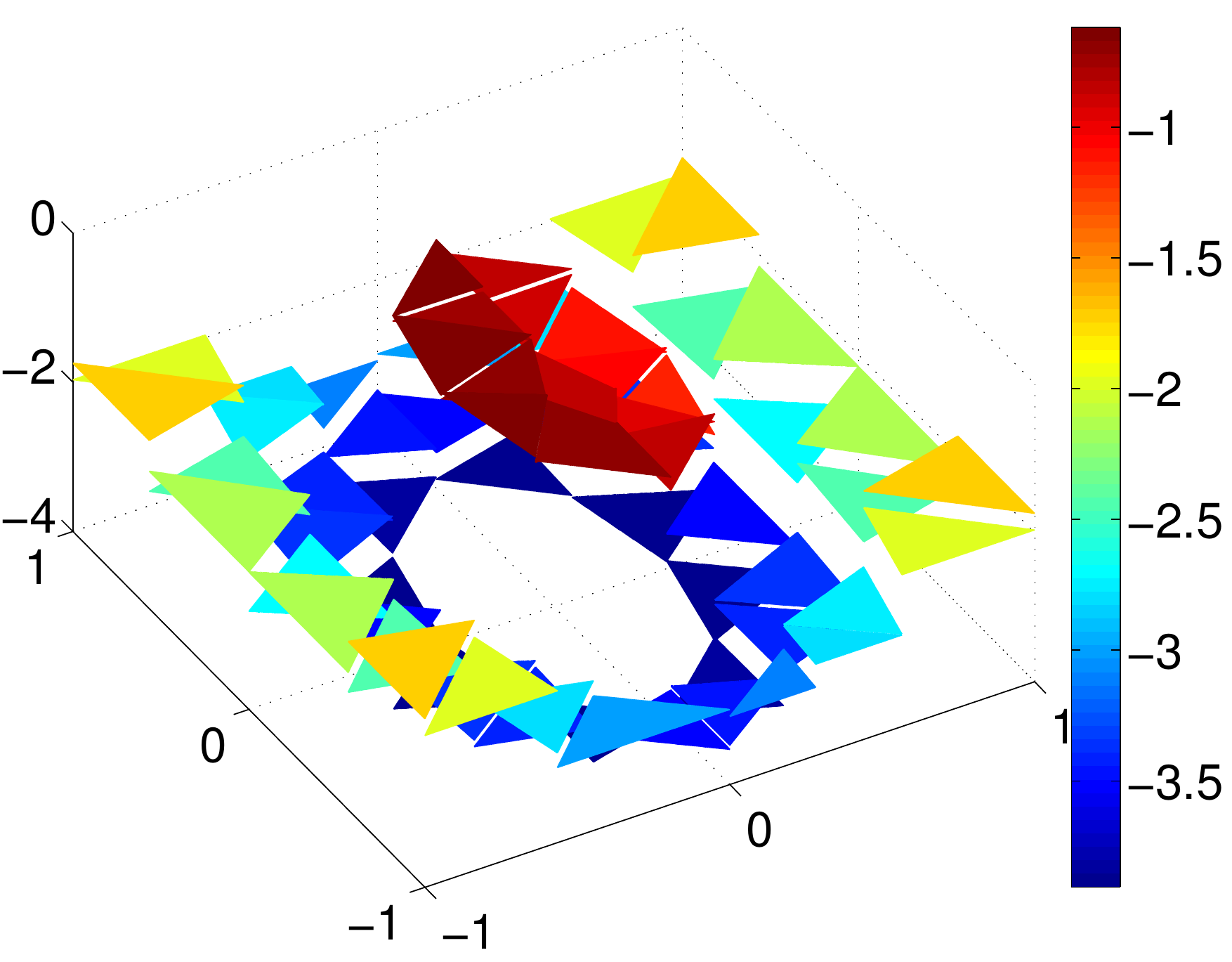}} \quad
  \resizebox{2.45in}{2.1in}{\includegraphics{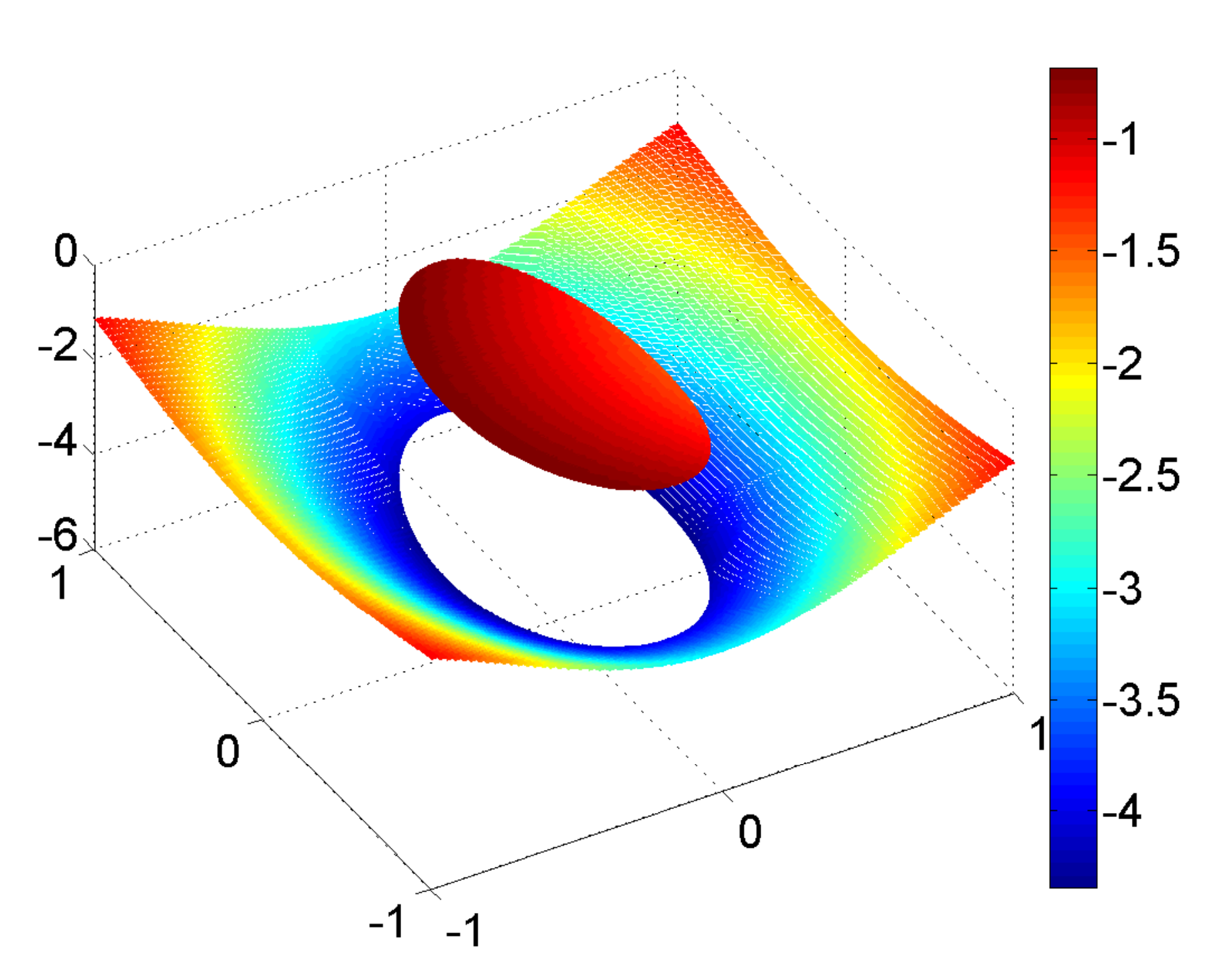}}
\end{tabular}
\caption{WG solutions of Example 3 with $b=10$.
Left: Mesh level 1; Right: mesh level 5.}
\label{fig.ex3b10}
\end{figure}

\begin{table}[!hb]
\caption{Numerical convergence test for Example 3 with $b=1000$.}
\label{tab.ex3b1000}
\begin{center}
\begin{tabular}{||c||c|cc|cc||}
\hline\hline
Mesh & $\max \{h\}$ & \multicolumn{2}{c}{Solution}  & \multicolumn{2}{c||}{Gradient}   \\
\cline{3-4} \cline{5-6}
 & & $L_\infty$ error & order & $L_\infty$ error & order \\
\hline\hline
Level 1 &5.6522e-01 &4.1482e-01 &        &3.1081e-01 &  \\ \hline
Level 2 &2.8261e-01 &5.3753e-02 & 2.9481 &1.0784e-01 & 1.5271 \\ \hline
Level 3 &1.4130e-01 &1.3707e-02 & 1.9713 &5.6088e-02 & 0.9431 \\ \hline
Level 4 &7.0652e-02 &3.0677e-03 & 2.1598 &2.9658e-02 & 0.9193 \\ \hline
Level 5 &3.5326e-02 &6.2057e-04 & 2.3055 &1.5204e-02 & 0.9640 \\
\hline\hline
\end{tabular}
\end{center}
\end{table}

\begin{figure}[!hb]
\centering
\begin{tabular}{cc}
  \resizebox{2.45in}{2.1in}{\includegraphics{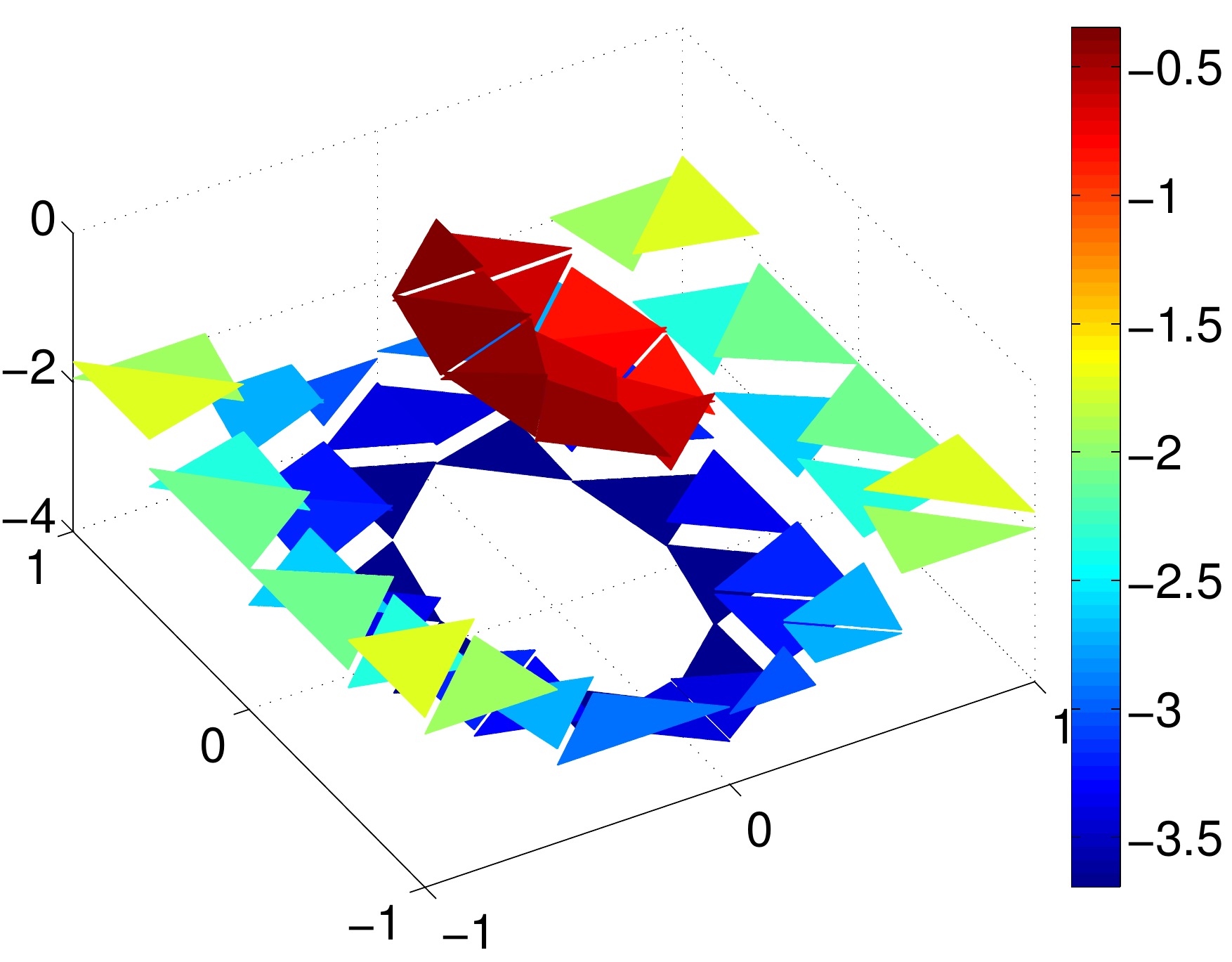}} \quad
  \resizebox{2.45in}{2.1in}{\includegraphics{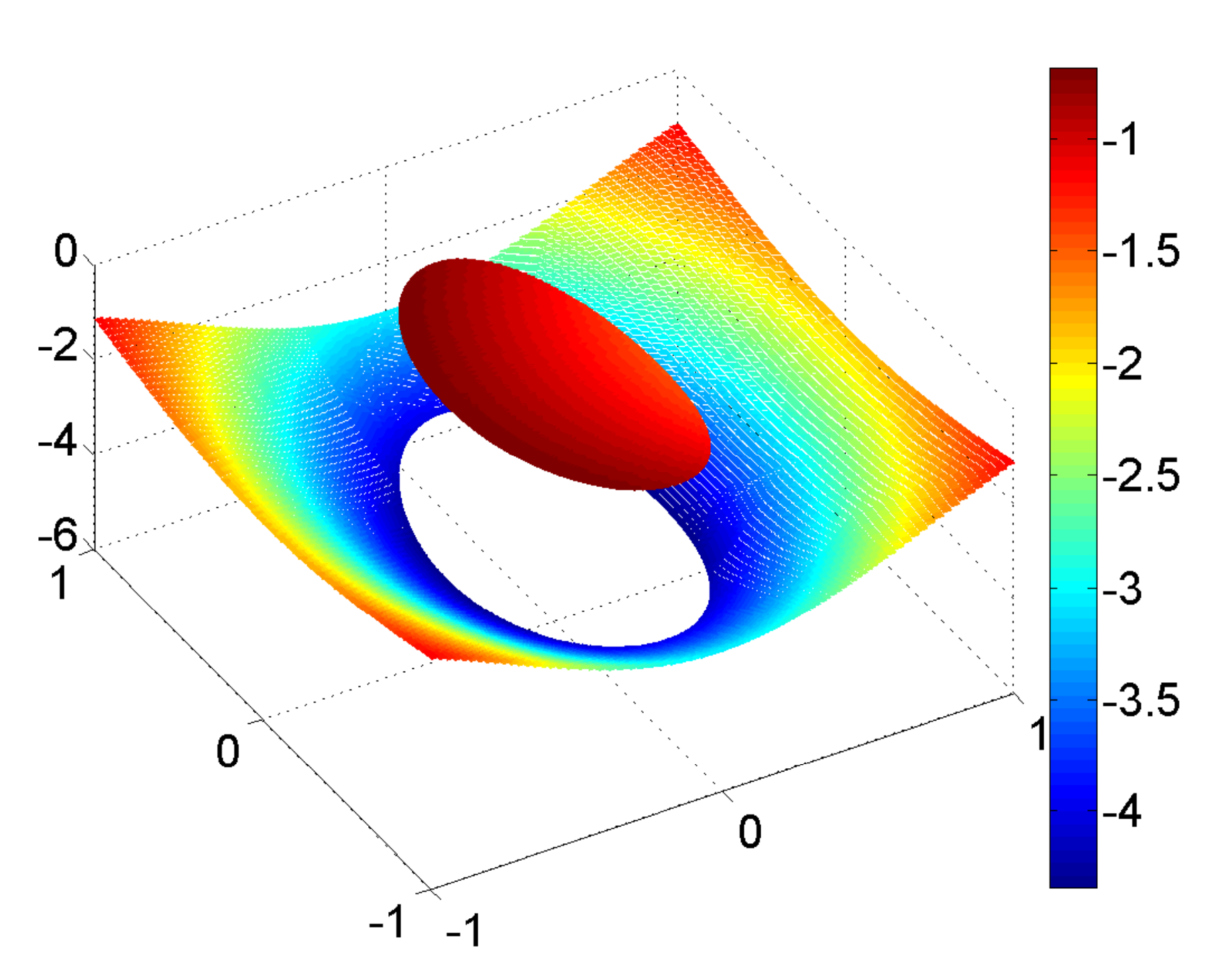}}
\end{tabular}
\caption{WG solutions of Example 3 with $b=1000$.
Left: Mesh level 1; Right: mesh level 5.}
\label{fig.ex3b1000}
\end{figure}

{\bf Example 3.} We next consider an example with a high contrast in
coefficient $A$ \cite{Zhou:2006d} to examine the robustness of the
WG method. Consider the second order elliptic equation over the
domain $[-1,1]\times[-1,1]$. The interface $\Gamma$ is defined to be
an ellipse
$$
\bigg(\frac{x}{10/27}\bigg)^2+\bigg(\frac{y}{18/27}\bigg)^2=1.
$$
We set $A_2=b$ inside of $\Gamma$, and $A_1=1$ outside of $\Gamma$.
The analytical solution is given as
\begin{align*}
u(x,y) & = 5e^{-x^2-y^2} & \mbox{outside } \Gamma, \\
v(x,y) & = e^x\cos(y) & \mbox{inside } \Gamma.
\end{align*}
Other necessary conditions can be derived from the analytical
solution.

Two values of $b$ are tested with $b=10$ and $b=1000$. The latter
case involves a much larger jump in $A$. This results in a larger
condition number for the corresponding discretized linear system.
This means that extensive computational time is usually needed in
the matrix solving. The WG method performs robustly for both cases.
The numerical results are presented in Table \ref{tab.ex3b10} and
Table \ref{tab.ex3b1000}, respectively, for $b=10$ and $b=1000$. It
can be seen in both tables that the convergence rates are first and
second orders, respectively, for the gradient and the solution in
$L_\infty$ norm. The WG solutions on mesh levels 1 and 5 are shown
in Fig. \ref{fig.ex3b10} and Fig. \ref{fig.ex3b1000}, respectively,
for $b=10$ and $b=1000$. We note that in both cases, the analytical
solutions are the same and are independent of the coefficient $A$.
Thus, even though  there are some minor differences in the WG
solutions at mesh level 1 for $b=10$ and $b=1000$, the WG solutions
are visually identical at mesh level 5. This demonstrates the
robustness of the WG method in handling interface problems with high
contrast.

\begin{table}[!hb]
\caption{Numerical convergence test for Example 4.}
\label{tab.ex4}
\begin{center}
\begin{tabular}{||c||c|cc|cc||}
\hline\hline
Mesh & $\max \{h\}$ & \multicolumn{2}{c}{Solution}  & \multicolumn{2}{c||}{Gradient}   \\
\cline{3-4} \cline{5-6}
 & & $L_\infty$ error & order & $L_\infty$ error & order \\
\hline\hline
Level 1 &3.4726e-01  &1.5885e-03 &        &3.5260e-02 & \\ \hline
Level 2 &1.7363e-01  &3.8053e-04 & 2.0616 &1.6630e-02 & 1.0842 \\ \hline
Level 3 &8.9010e-02  &9.1082e-05 & 2.1399 &9.0379e-03 & 0.9126 \\ \hline
Level 4 &4.9602e-02  &2.6778e-05 & 2.0936 &5.0265e-03 & 1.0034 \\ \hline
Level 5 &2.4286e-02  &5.9847e-06 & 2.0982 &2.4459e-03 & 1.0087 \\
\hline\hline
\end{tabular}
\end{center}
\end{table}


\begin{figure}[!hb]
\centering
\begin{tabular}{cc}
  \resizebox{2.45in}{2.1in}{\includegraphics{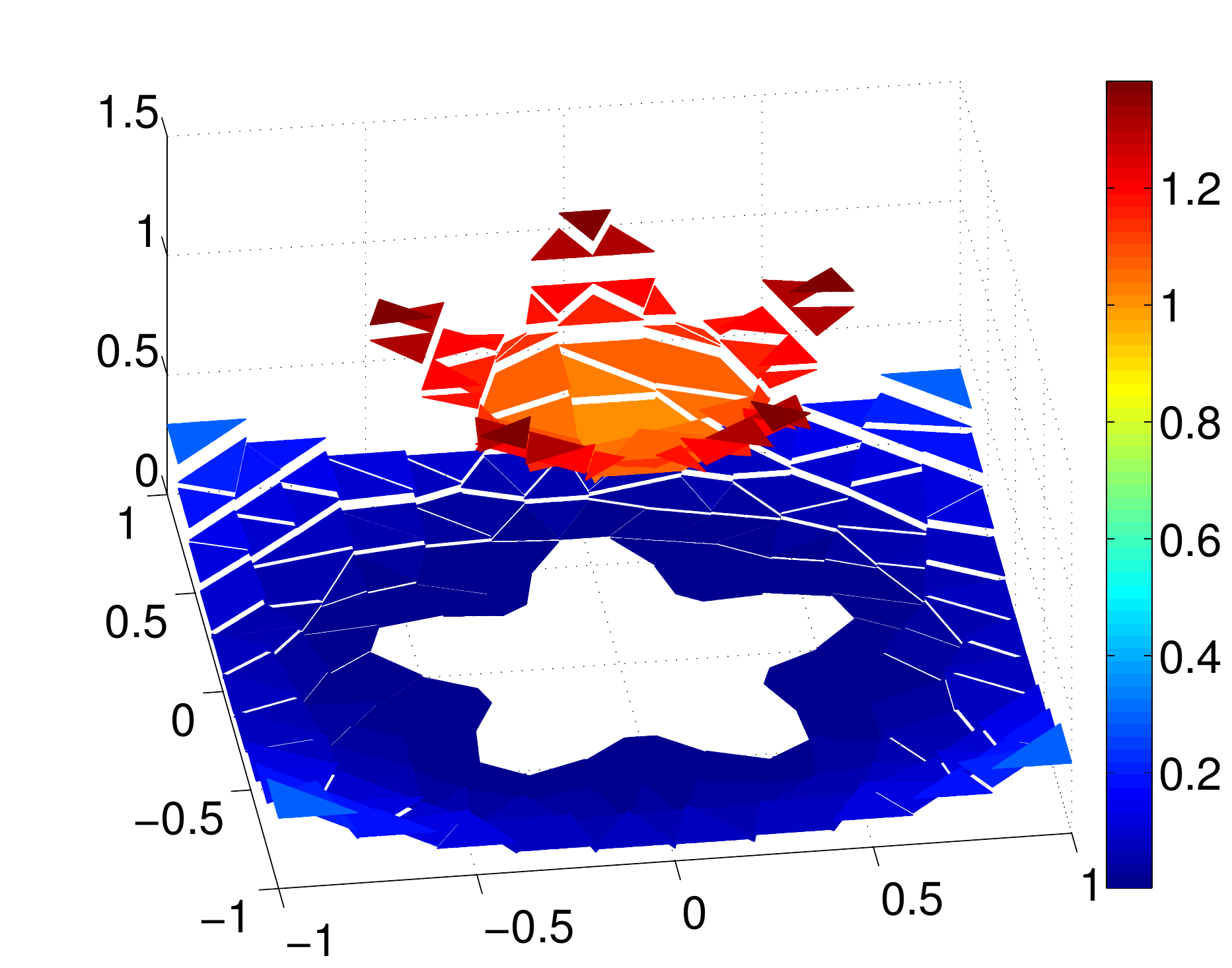}} \quad
  \resizebox{2.45in}{2.1in}{\includegraphics{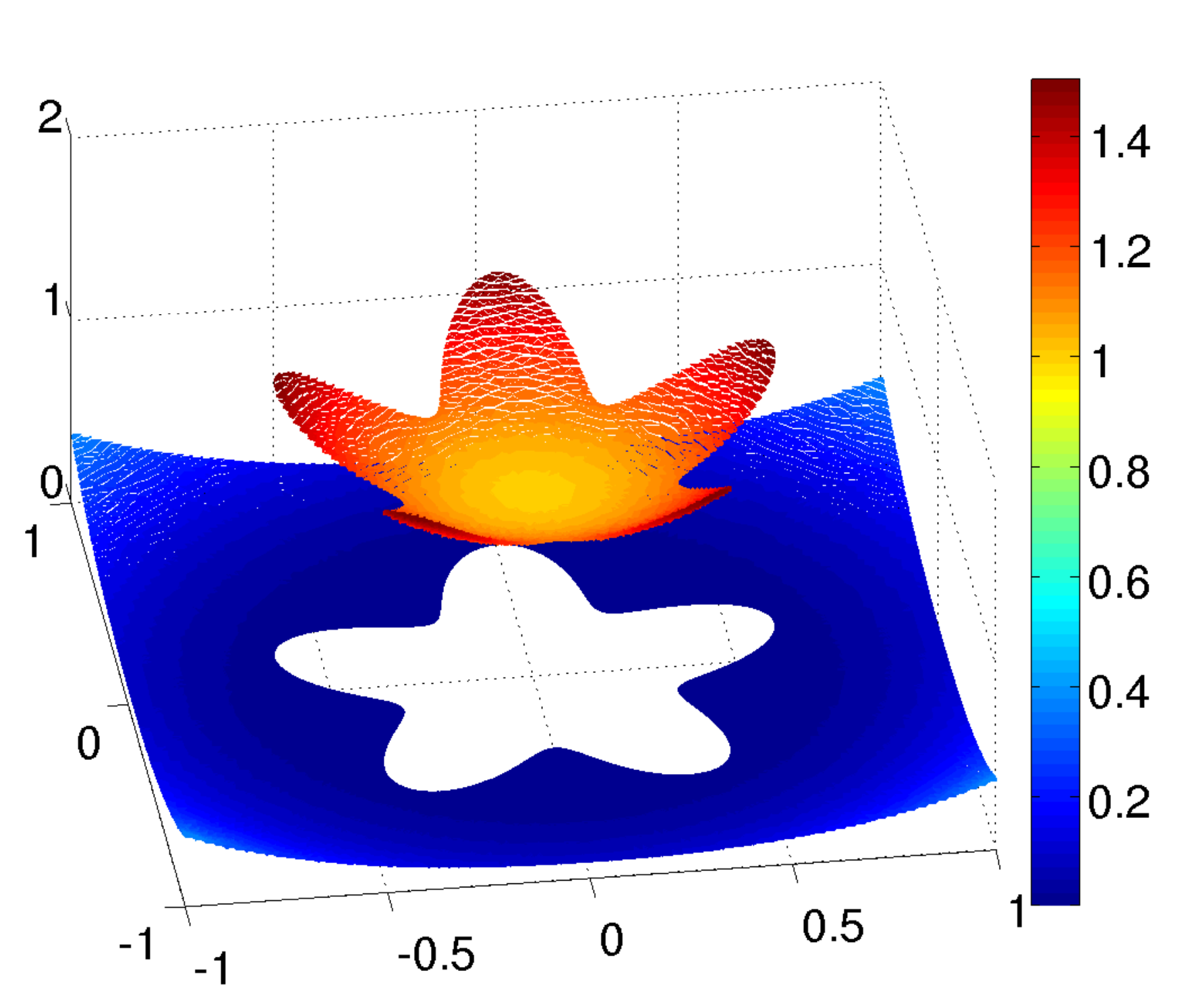}}
\end{tabular}
\caption{WG solutions of Example 4.
Left: Mesh level 1; Right: mesh level 5.}
\label{fig.ex4}
\end{figure}

\begin{figure}[!hb]
\centering
\begin{tabular}{cc}
\resizebox{2.5in}{2.5in}{\includegraphics{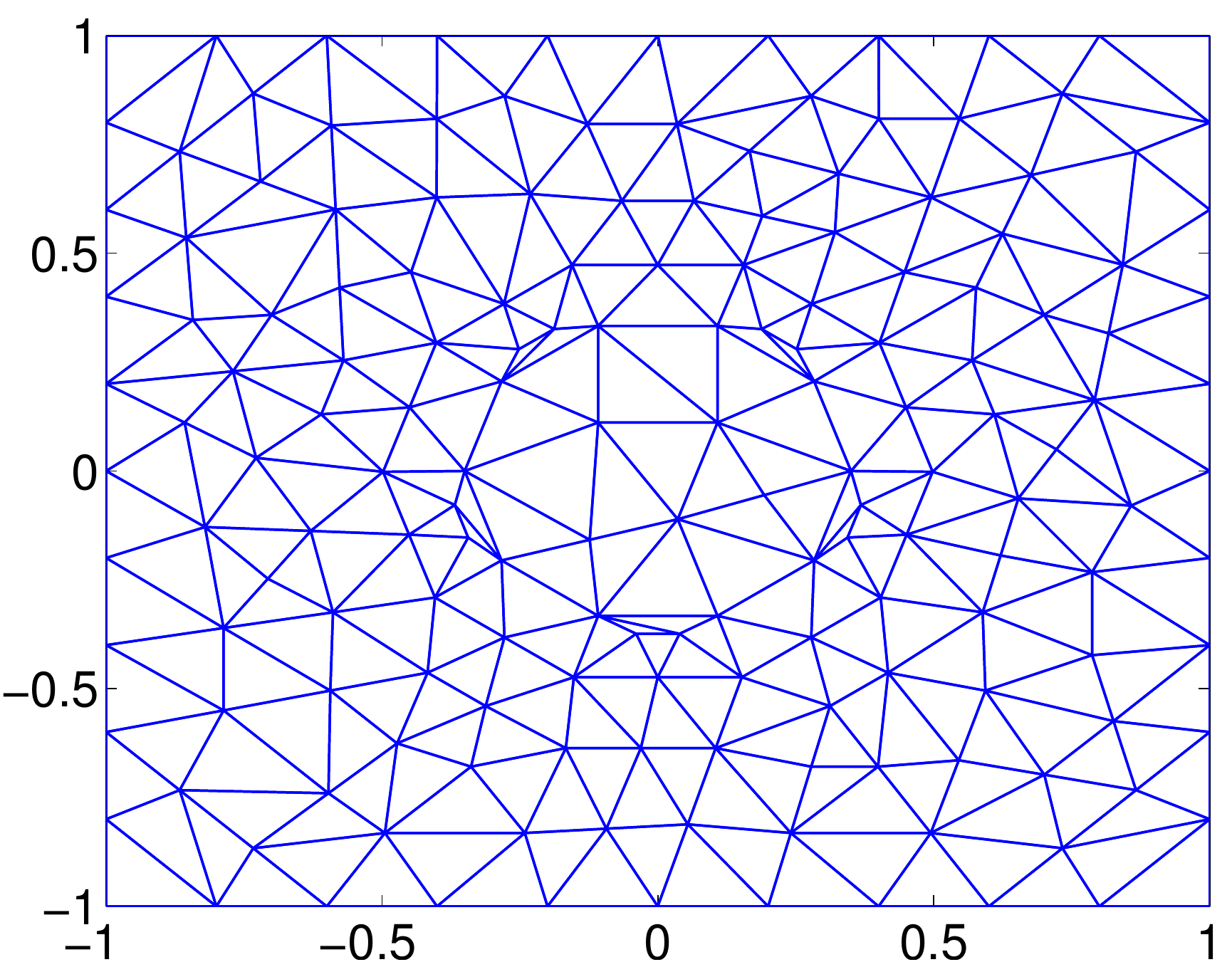}} \quad
\resizebox{2.5in}{2.5in}{\includegraphics{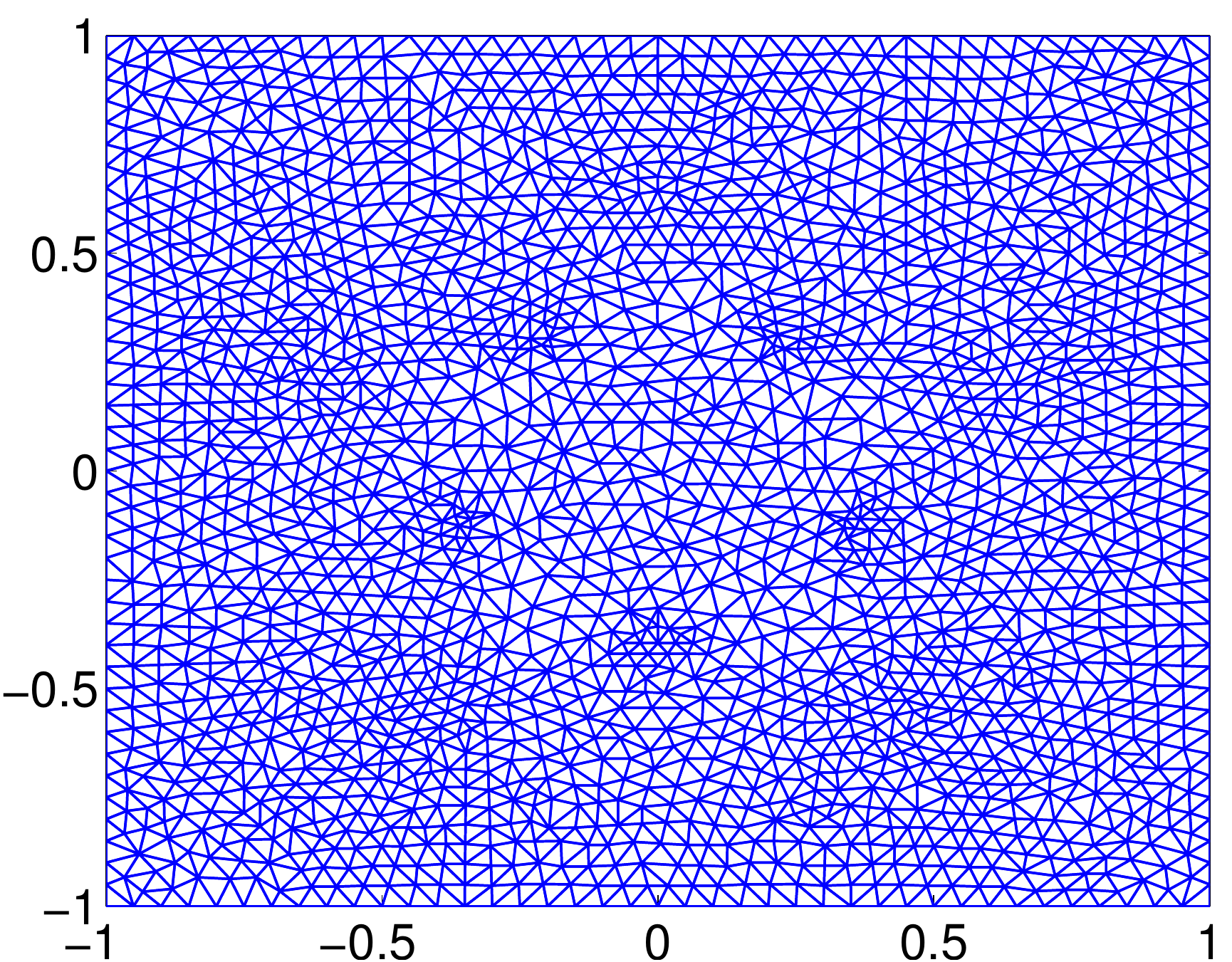}} \\
\resizebox{4in}{4in}{\includegraphics{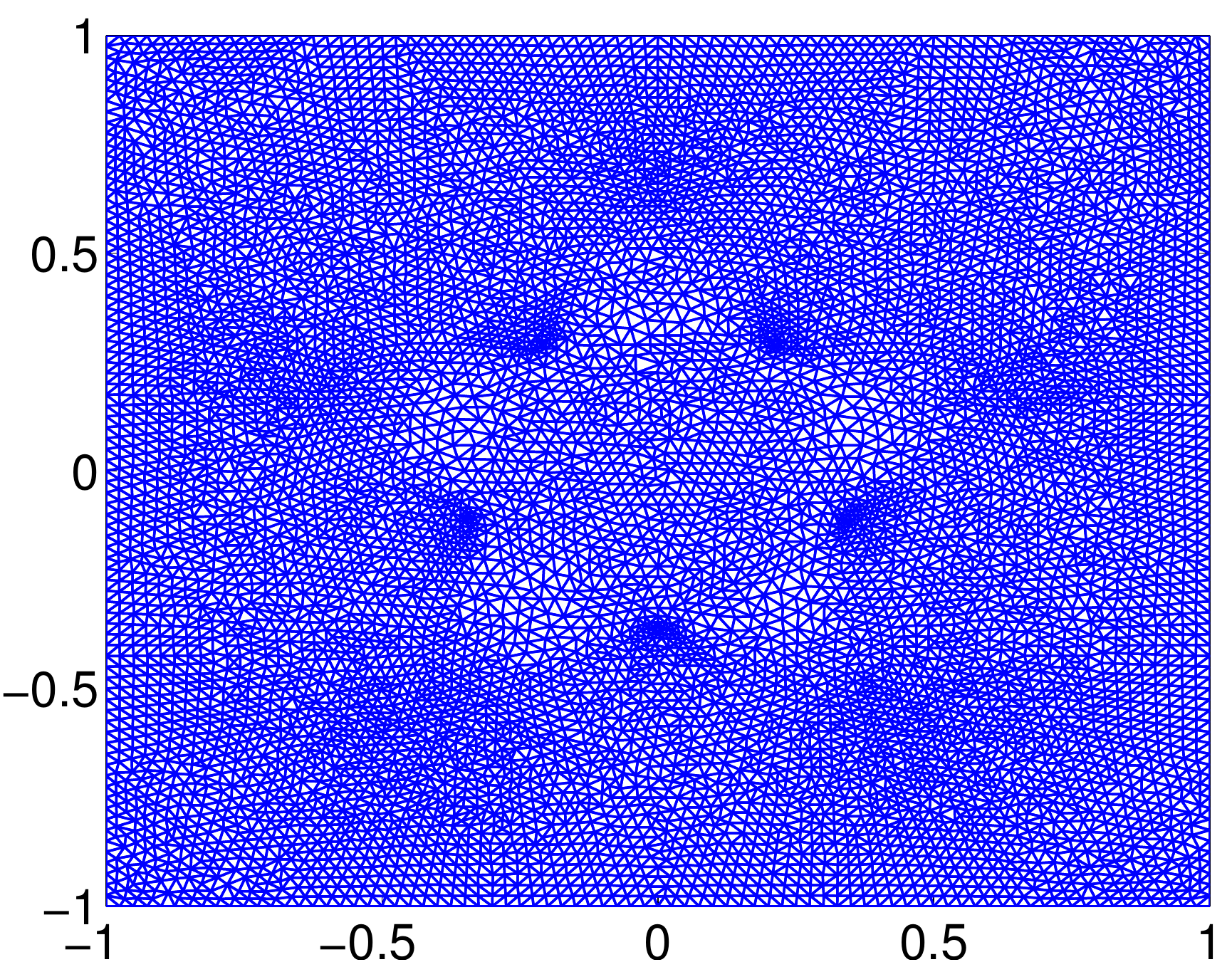}}
\end{tabular}
\caption{Finite element meshes of Example 4.
Top left: Mesh level 1; Top right: Mesh level 3; Bottom: Mesh level 5.}
\label{fig.ex4mesh}
\end{figure}

{\bf Example 4.}
We next consider a classical elliptic interface problem with both concave
and convex curve segments \cite{Zhou:2006d}.
The interface $\Gamma$ is parametrized with the polar angle $\theta$
$$r=\frac{1}{2}+\frac{\sin(5\theta)}{7},$$
inside a square domain $[-1,1]\times[-1,1]$.
The coefficient is chosen to be $A_1=10$ and $A_2=1$, respectively,
for outside and inside $\Gamma$.
The analytical solution is given as
\begin{align*}
u(x,y) & = 0.1(x^2+y^2)^2-0.01\ln(2\sqrt{x^2+y^2}) & \mbox{outside } \Gamma \\
v(x,y) & =e^{x^2+y^2} & \mbox{inside } \Gamma.
\end{align*}
Other necessary interface jump conditions can be derived from the analytical solutions.

The WG solutions and their convergence are reported in Table
\ref{tab.ex4} and Fig. \ref{fig.ex4}, respectively. It can be seen
from the table that the convergence is of order two and one for the
solution and its gradient, respectively, in the $L_\infty$ norm. It
should be pointed out that mesh quality plays an important role in
the performance of general finite element methods. For weak
Galerkin, we noted that the quality of mesh affects the
corresponding numerical approximation in $L_\infty$ norms. For
example, we noted a slower convergence when the finite element
partition consists of both acute and non-acute triangles with
nontrivial portion. After eliminating most of the non-acute
triangles, the WG approximation exhibits an improved rate of
convergence numerically. Figure \ref{fig.ex4mesh} demonstrates some
meshes generated by MATLAB in the present study after an elimination
of most non-acute triangles through a numerical procedure. It can be
seen that, at each level, the triangular mesh is fitted to the
interface at the nodal points. Observe that these meshes are highly
non-uniform and unstructured. In particular, it is clear that much
smaller triangles are employed to resolve the concave portion of the
interface. This somehow helped the WG method to deliver numerical
solutions with high accuracy since the solution changes rapidly in
regions with large curvatures.

\begin{table}[!hb]
\caption{Numerical convergence test for Example 5.}
\label{tab.ex5}
\begin{center}
\begin{tabular}{||c||c|cc|cc||}
\hline\hline
Mesh & $\max \{h\}$ & \multicolumn{2}{c}{Solution}  & \multicolumn{2}{c||}{Gradient}   \\
\cline{3-4} \cline{5-6}
 & & $L_\infty$ error & order & $L_\infty$ error & order \\
\hline\hline
Level 1  & 6.2575e-01  & 1.5275e-02 &             & 4.8833e-02 &  \\ \hline
Level 2  & 2.9662e-01  & 3.9259e-03 & 1.8200 & 3.1059e-02 & 0.6062  \\ \hline
Level 3  & 1.5170e-01  & 1.0586e-03 & 1.9546 & 1.5584e-02 & 1.0285  \\ \hline
Level 4  & 7.9102e-02  & 3.3419e-04 & 1.7707 & 8.0209e-03 & 1.0200  \\ \hline
Level 5  & 4.4180e-02  & 1.1698e-04 & 1.8022 & 4.4282e-03 & 1.0199  \\
\hline\hline
\end{tabular}
\end{center}
\end{table}

\begin{figure}[!hb]
\centering
\begin{tabular}{cc}
  \resizebox{2.45in}{2.1in}{\includegraphics{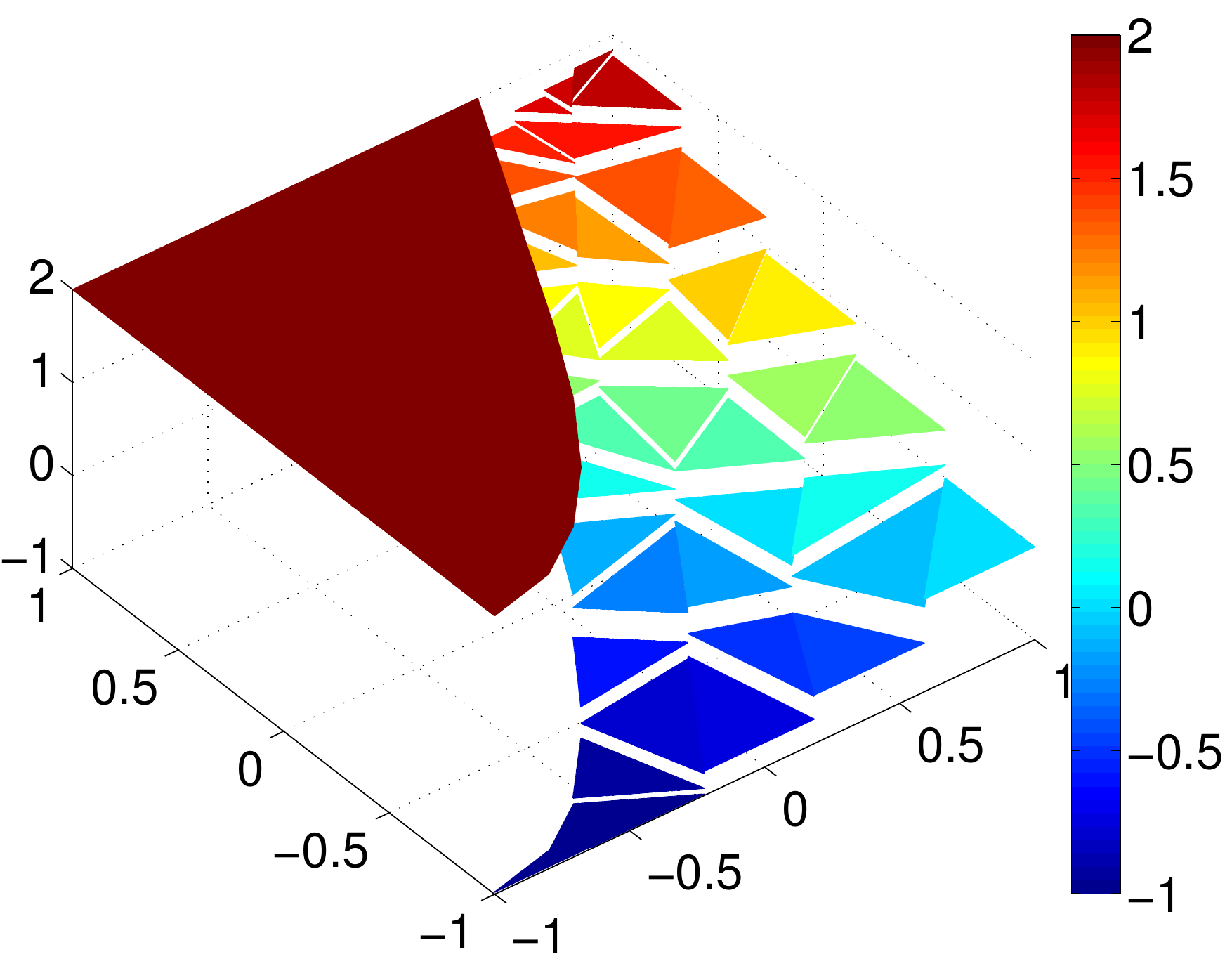}} \quad
  \resizebox{2.45in}{2.1in}{\includegraphics{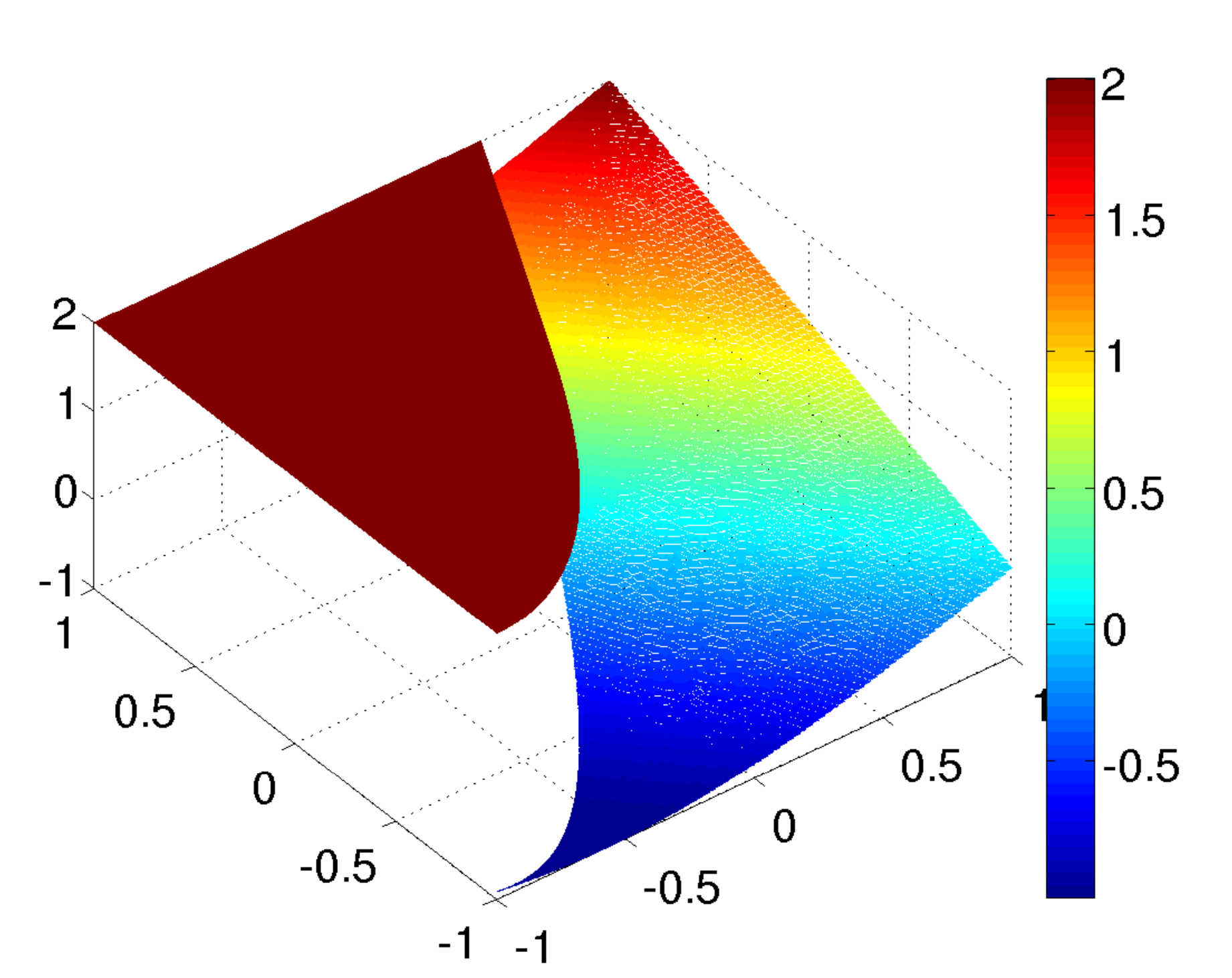}}
\end{tabular}
\caption{WG solutions of Example 5.
Left: Mesh level 1; Right: mesh level 5.}
\label{fig.ex5}
\end{figure}

\bigskip

{\bf Example 5.} In this and next two examples, we consider an
interface problem with $C^1$ continuous interfaces as discussed in
\cite{Hou:2010}. The domain is given as $\Omega=[-1,1]\times[-1,1]$,
and the interface $\Gamma$ is defined with two pieces: $y=2x$ for
$x+y>0$ and $y=2x+x^2$, for $x+y\le 0$. The interface $\Gamma$
divides the domain into two parts $\Omega_1$ and $\Omega_2$. Here we
denote $\Omega_1$ and $\Omega_2$, respectively, to be the part on
the left and right of $\Gamma$. An inhomogeneous second order
elliptic equation is considered, i.e., the coefficient $A$ is not
piecewise constant. Instead, we have $A_1(x,y)=(xy+2)/5$ in
$\Omega_1$ and $A_2(x,y)=(x^2-y^2+3)/7$ in $\Omega_2$. The
analytical solution $u$ is fixed to be $u=2$ in $\Omega_1$, while in
$\Omega_2$, $v$ is defined piecewisely by
$$
v(x,y)= \begin{cases}
\sin(x+y),  \quad \mbox{ if }x+y\le 0\\
x+y,     \quad \quad \quad         \mbox{ if }x+y>0.
\end{cases}
$$
Other necessary conditions can be derived from the analytical form
of the solution. In particular, the Dirichlet boundary data is
derived for both $u=g$ and $v=g$, since in this example, $\Omega_2$
is not enclosed by $\Omega_1$.

A numerical investigation was conducted and the results are reported
in Table \ref{tab.ex5}. Again, the WG method attains the theoretical
order of convergence for this example, i.e., first and second order
of accuracy respectively for the gradient and the solution in
$L_\infty$ norm. We note that due to the piecewise definition, the
solution $v(x,y)$ is $C^2$ continuous but not $C^3$ across the line
$x+y=1$. In our computation, the line $x+y=1$ is not treated as a
boundary. Thus, our triangulation is not aligned with this line.
This can be seen clearly in the graph of the WG solution on mesh
level 1 (Fig. \ref{fig.ex5} left). In the right chart of Fig.
\ref{fig.ex5}, a smooth transition can be seen across $x+y=1$ in the
WG solution on mesh level 5.

\begin{table}[!hb]
\caption{Numerical convergence test for Example 6.}
\label{tab.ex6}
\begin{center}
\begin{tabular}{||c||c|cc|cc||}
\hline\hline
Mesh & $\max \{h\}$ & \multicolumn{2}{c}{Solution}  & \multicolumn{2}{c||}{Gradient}   \\
\cline{3-4} \cline{5-6}
 & & $L_\infty$ error & order & $L_\infty$ error & order \\
\hline\hline
Level 1  & 6.2575e-01 & 2.6559e-02 &             & 4.9273e-02 & \\ \hline
Level 2  & 2.9662e-01 & 5.9674e-03 & 2.0001 & 2.8456e-02 & 0.7355 \\ \hline
Level 3  & 1.5170e-01 & 1.5342e-03 & 2.0257 & 1.4493e-02 & 1.0062 \\ \hline
Level 4  & 7.9102e-02 & 4.4127e-04 & 1.9137 & 7.5081e-03 & 1.0100 \\ \hline
Level 5  & 4.4180e-02 & 1.5022e-04 & 1.8500 & 3.9584e-03 & 1.0990 \\
\hline\hline
\end{tabular}
\end{center}
\end{table}

\begin{figure}[!hb]
\centering
\begin{tabular}{cc}
  \resizebox{2.45in}{2.1in}{\includegraphics{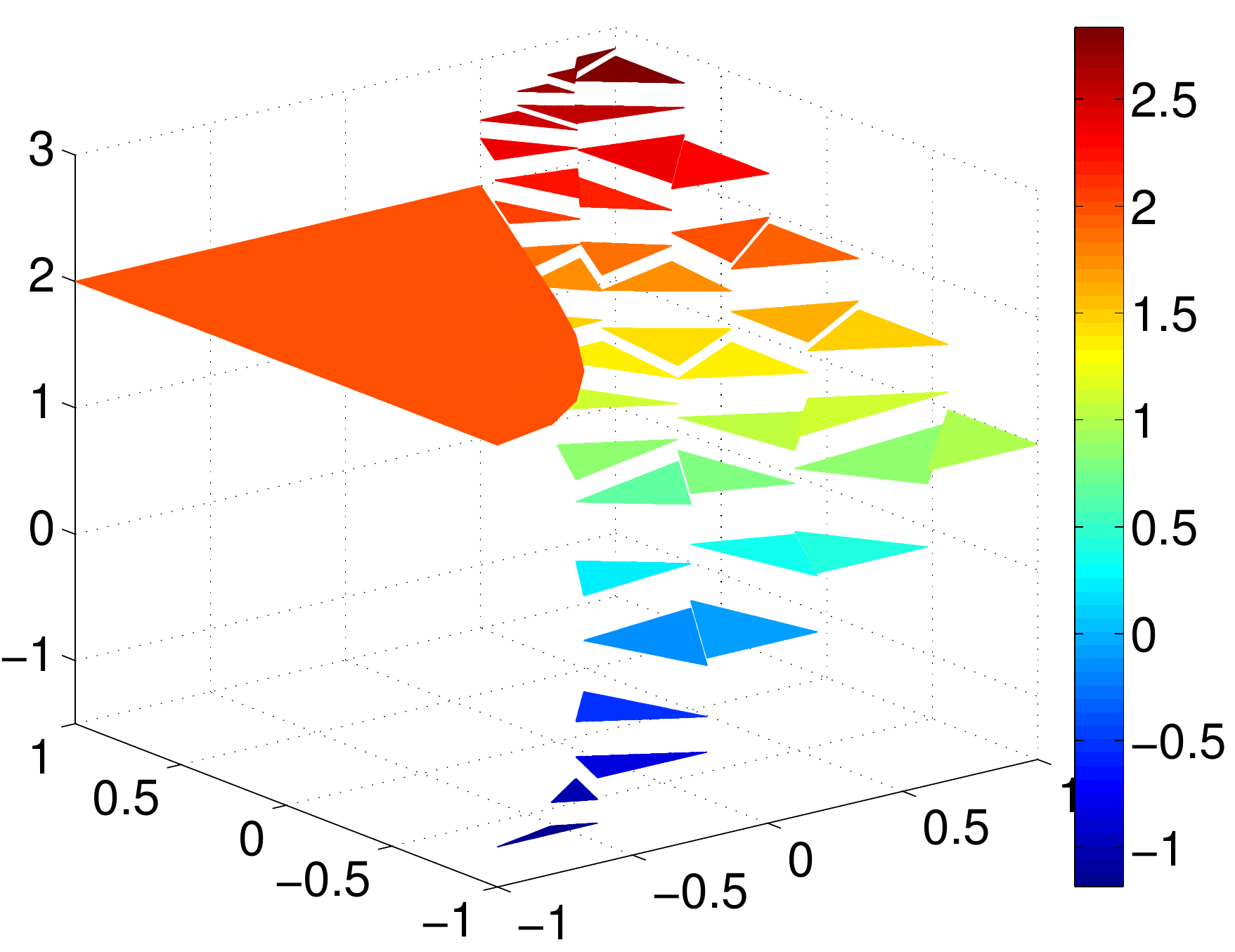}} \quad
  \resizebox{2.45in}{2.1in}{\includegraphics{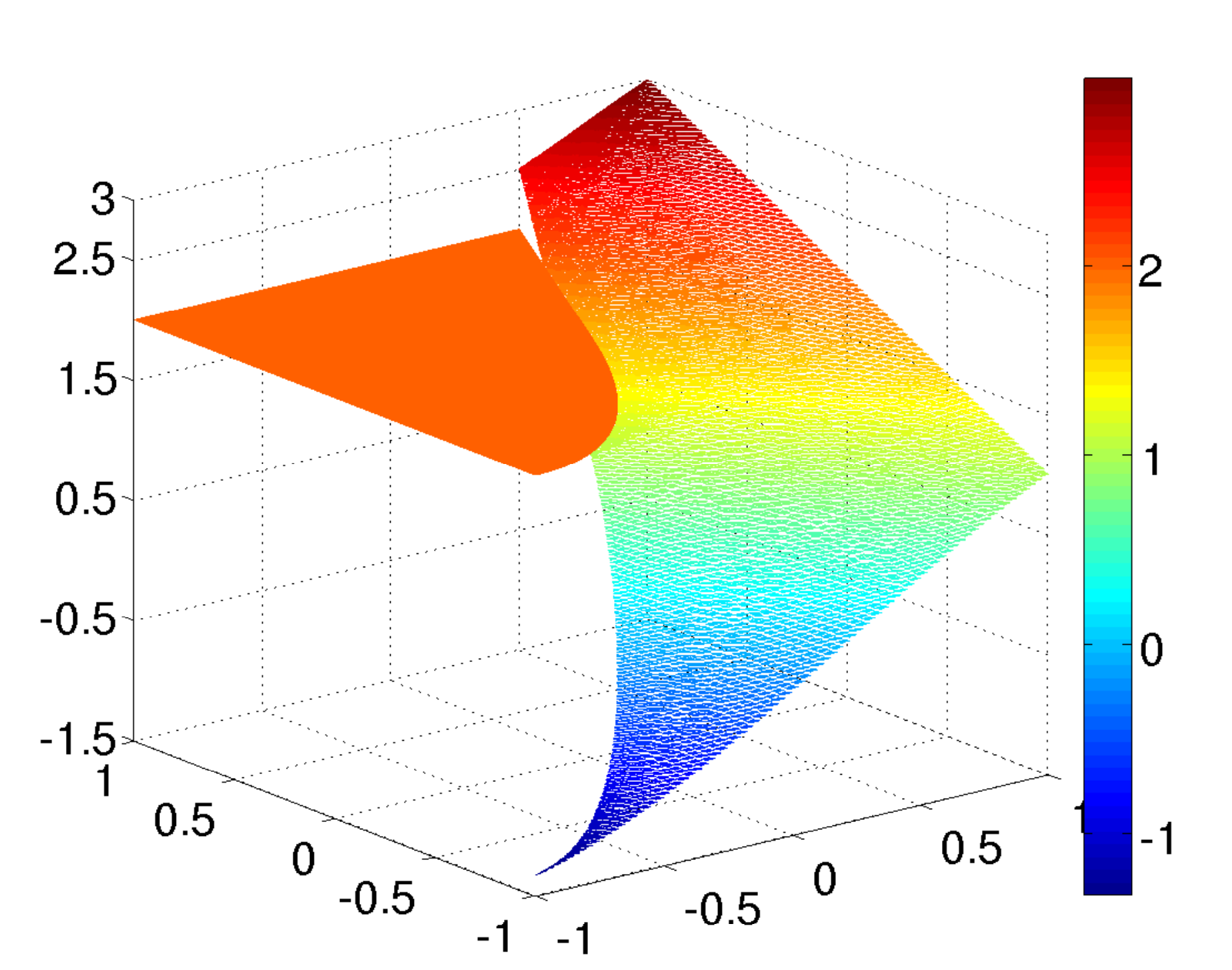}}
\end{tabular}
\caption{WG solutions of Example 6.
Left: Mesh level 1; Right: mesh level 5.}
\label{fig.ex6}
\end{figure}

\medskip

{\bf Example 6.} Here we consider the same interface and domain
geometry as in Example 5. The coefficient function $A(x,y)$ is
defined in the same way as in Example 5. We also fix the solution as
$u=2$ in $\Omega_1$. But in $\Omega_2$, the solution is given
differently by (see \cite{Hou:2010} for details)
$$
v(x,y)= \begin{cases}
\sin(x+y)+\cos(x+y),  \quad \mbox{ if }x+y\le 0\\
x+y+1,     \quad \quad \quad \quad \quad  \quad \quad   \mbox{ if }x+y>0.
\end{cases}
$$
Other necessary conditions can be derived similarly. Observe that
the solution $v=v(x,y)$ is $C^1$ continuous but not $C^2$ across the
line $x+y=1$.

The WG algorithm is formulated as in the previous example. In
particular, since $v(x,y)$ is $C^1$ across $x+y=1$, the function and
flux jumps are trivially zero across $x+y=1$. Thus, again, no
special boundary or interface treatment is carried out near the line
$x+y=1$. In fact, this line actually cuts through the finite element
triangles; see the left chart of Fig. \ref{fig.ex6}. It can be seen
that the loss of regularity in $v(x,y)$ is visually
indistinguishable in the WG solution on mesh level 5 (see the right
chart of Fig. \ref{fig.ex6}). In literature, a certain reduction of
convergence order has been reported for this $C^1$ continuous
example \cite{Hou:2010} when the standard Galerkin finite element
method is employed for a numerical approximation. In the present
study, it is found that the order of convergence remains unchanged
for the WG method, see Table \ref{tab.ex6}. This indicates that the
WG method preforms robustly for the challenging problem with low
regularity.

\begin{table}[!hb]
\caption{Numerical convergence test for Example 7.}
\label{tab.ex7}
\begin{center}
\begin{tabular}{||c||c|cc|cc||}
\hline\hline
Mesh & $\max \{h\}$ & \multicolumn{2}{c}{Solution}  & \multicolumn{2}{c||}{Gradient}   \\
\cline{3-4} \cline{5-6}
 & & $L_\infty$ error & order & $L_\infty$ error & order \\
\hline\hline
Level 1  & 6.5801e-01 & 3.4340e-02 &             & 3.2817e-01 &  \\ \hline
Level 2  & 3.4551e-01 & 1.1022e-02 & 1.7641 & 1.7896e-01 & 0.9413 \\ \hline
Level 3  & 1.7932e-01 & 3.5197e-03 & 1.7405 & 9.5039e-02 & 0.9650 \\ \hline
Level 4  & 9.1176e-02 & 1.0806e-03 & 1.7459 & 5.0095e-02 & 0.9468 \\ \hline
Level 5  & 4.6365e-02 & 3.3092e-04 & 1.7499 & 2.6003e-02 & 0.9696 \\
\hline\hline
\end{tabular}
\end{center}
\end{table}

\begin{figure}[!hb]
\centering
\begin{tabular}{cc}
  \resizebox{2.45in}{2.1in}{\includegraphics{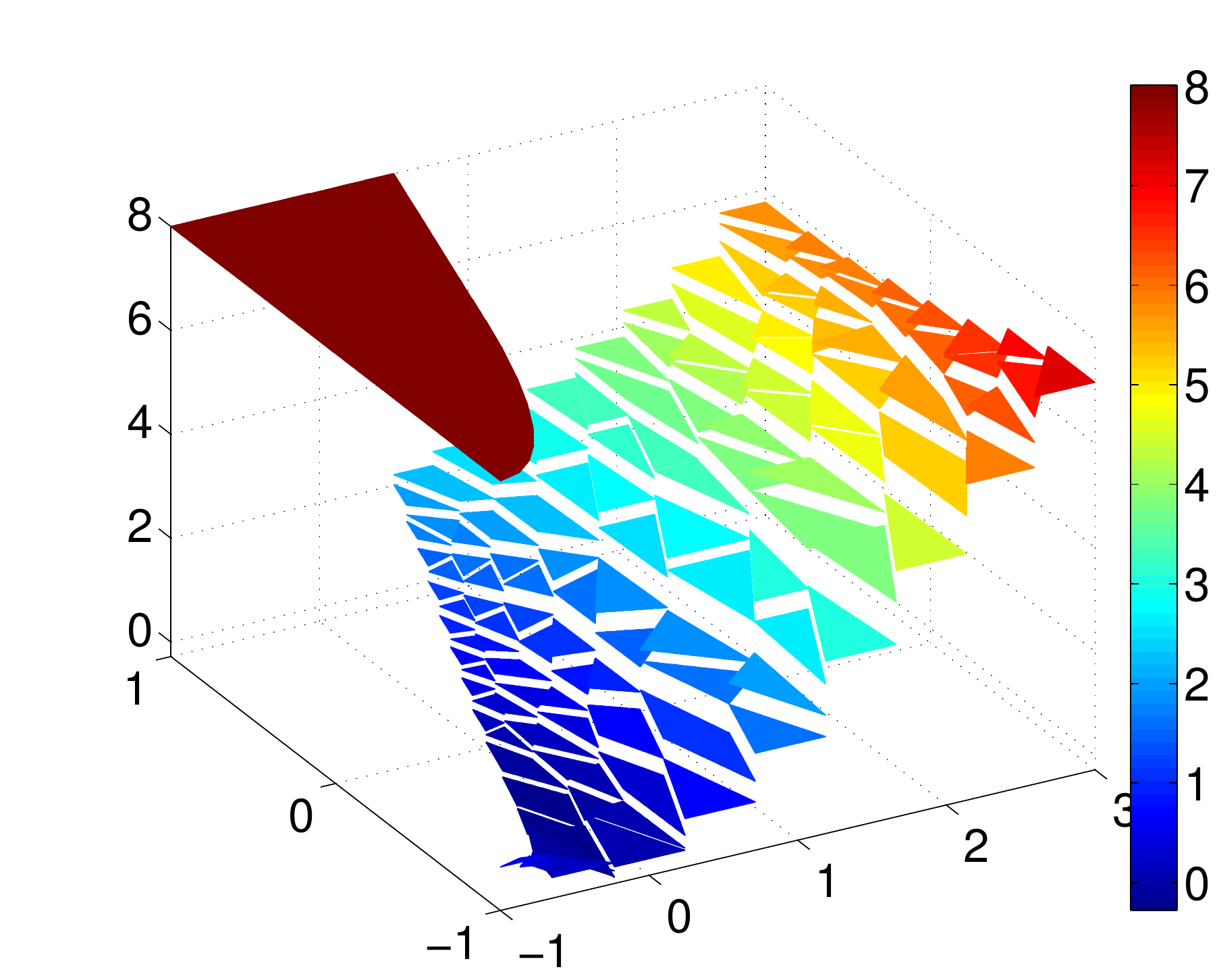}} \quad
  \resizebox{2.45in}{2.1in}{\includegraphics{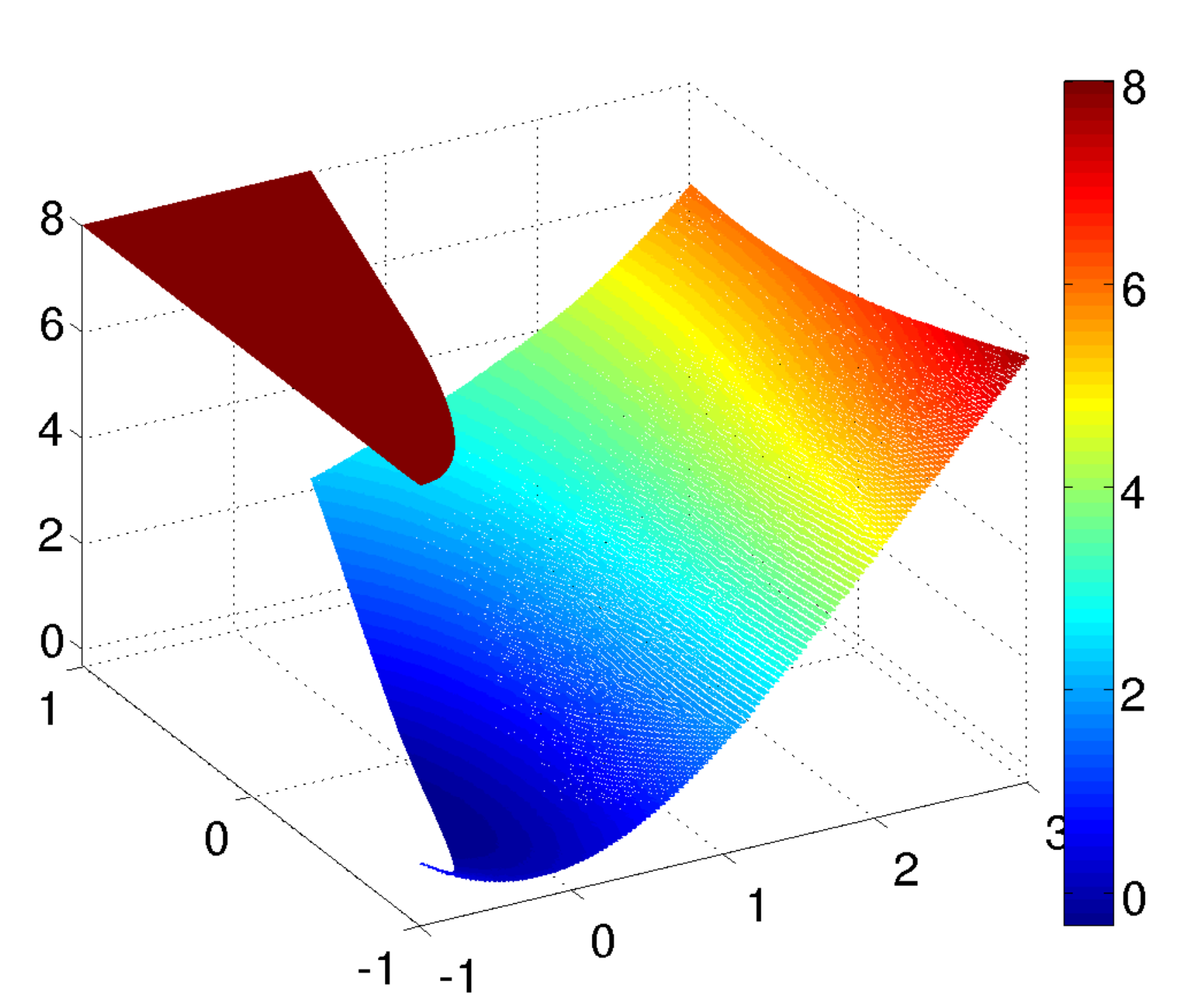}}
\end{tabular}
\caption{WG solutions of Example 7.
Left: Mesh level 1; Right: mesh level 5.}
\label{fig.ex7}
\end{figure}

\medskip

{\bf Example 7.} Consider again the same interface problem as in
Examples 5 and 6, over a larger domain $\Omega=[-1,3]\times[-1,1]$.
The coefficient function $A(x,y)$ is now defined to be $A_1(x,y)=1$
in $\Omega_1$ and $A_2(x,y)=2+\sin(x+y)$ in $\Omega_2$. The
analytical solution is give as \cite{Hou:2010}
\begin{align*}
u(x,y) & = 8 & \mbox{in } \Omega_1, \\
v(x,y) & = (x^2+y^2)^{5/6}+\sin(x+y) & \mbox{in } \Omega_2.
\end{align*}
Other necessary conditions can be derived from the analytical solutions.

The analytical solution is piecewise $H^2$ in this example. In
particular, the function $v(x,y)$ has a singularity at $(0,0)$ with
blow-up derivatives. Even though the analytical solution is well
behaved in other places, the singularity at the origin may cause
some order reduction in the finite element simulation
\cite{Hou:2010}. Due to this singularity, the convergence rate of
the WG method in the $L_\infty$ norm of the solution is reduced to
be about $1.75$ for the present example, see Table \ref{tab.ex7}. On
the other hand, the order of convergence for the WG method for the
gradient remains to be about one. The numerical solutions at mesh
levels 1 and 5 are depicted in Fig. \ref{fig.ex7}. Interestingly, in
both levels, the singularity at the origin is invisible. But the
numerical algorithm can sense this singularity via the convergence
rate.


\begin{table}[!hb]
\caption{Numerical convergence test for Example 8.}
\label{tab.ex8}
\begin{center}
\begin{tabular}{||c||c|cc|cc||}
\hline\hline
Mesh & $\max \{h\}$ & \multicolumn{2}{c}{Solution}  & \multicolumn{2}{c||}{Gradient}   \\
\cline{3-4} \cline{5-6}
 & & $L_\infty$ error & order & $L_\infty$ error & order \\
\hline\hline
Level 1  & 5.6919e-01 & 1.4984e-02 &        & 3.3001e-02 &  \\ \hline
Level 2  & 2.8459e-01 & 4.4307e-03 & 1.7578 & 1.7812e-02 & 0.8897 \\ \hline
Level 3  & 1.4230e-01 & 1.2336e-03 & 1.7687 & 9.8402e-03 & 0.8561  \\ \hline
Level 4  & 7.1149e-02 & 3.2639e-04 & 1.8634 & 5.1905e-03 & 0.9228 \\ \hline
Level 5  & 3.5574e-02 & 8.5110e-05 & 1.9231 & 2.6699e-03 & 0.9591 \\
\hline\hline
\end{tabular}
\end{center}
\end{table}

\begin{figure}[!hb]
\centering
\begin{tabular}{cc}
  \resizebox{2.45in}{2.1in}{\includegraphics{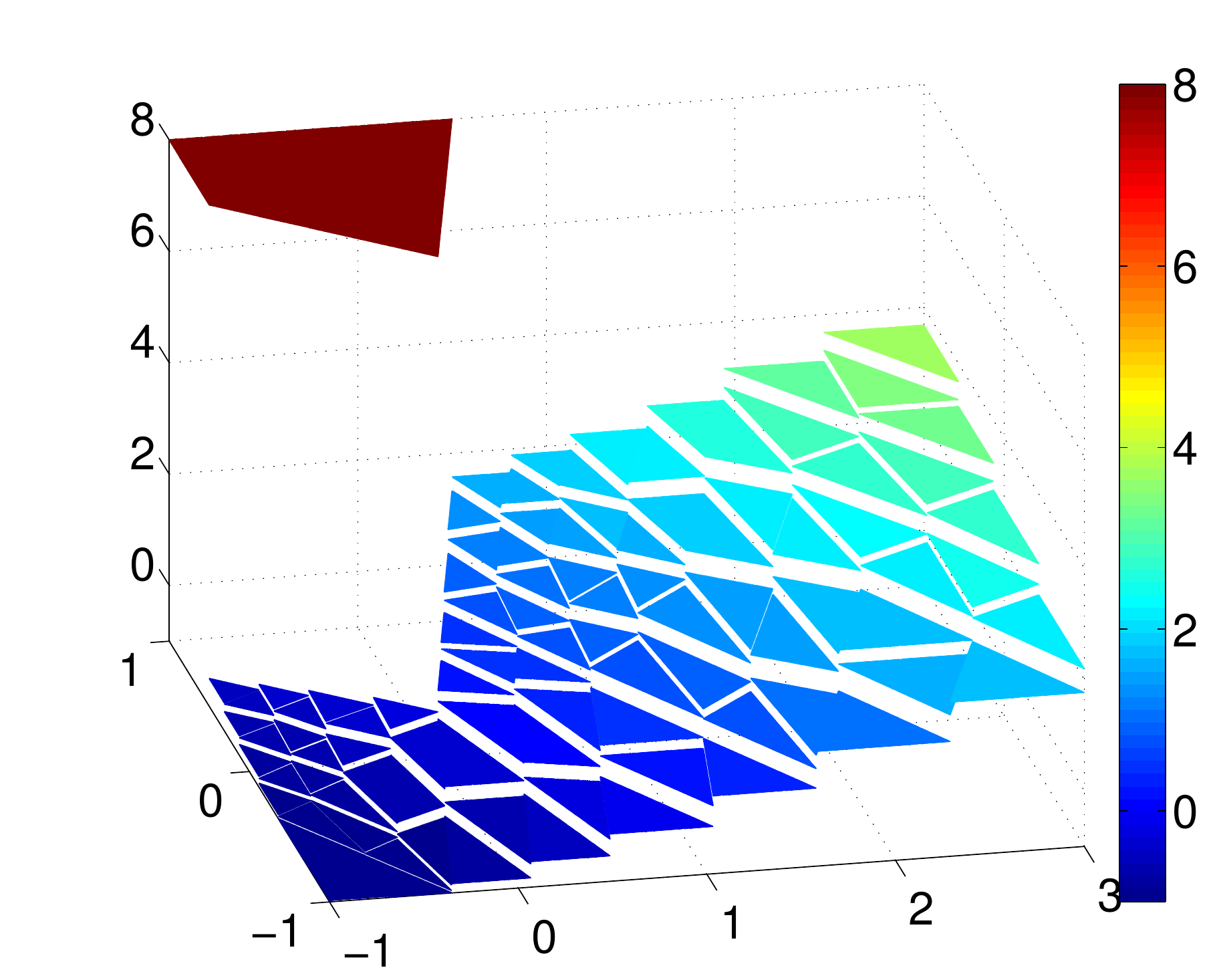}} \quad
  \resizebox{2.45in}{2.1in}{\includegraphics{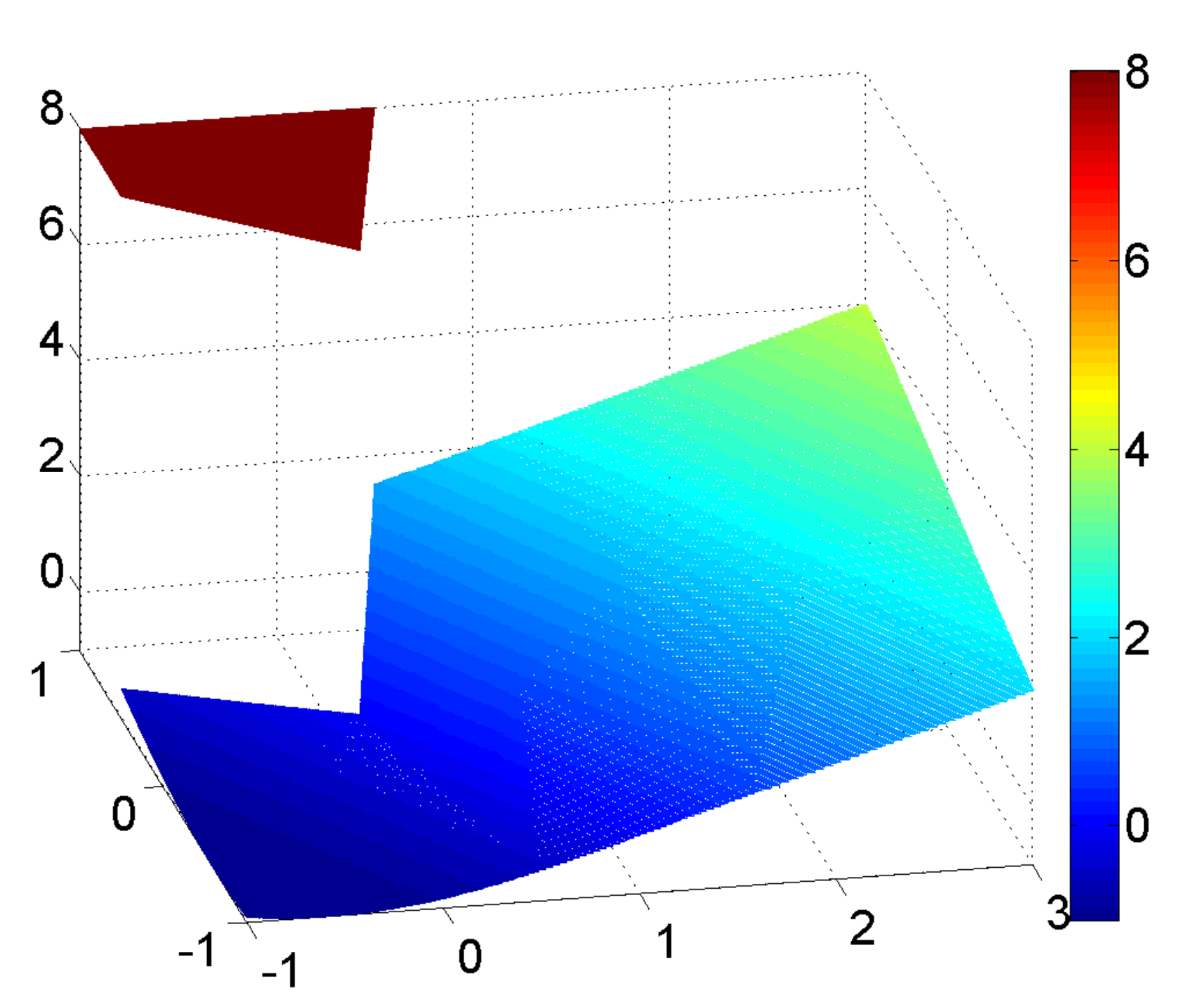}}
\end{tabular}
\caption{WG solutions of Example 8.
Left: Mesh level 1; Right: mesh level 5.}
\label{fig.ex8}
\end{figure}

\medskip

{\bf Example 8.} In this and next two examples, we continue to
examine the performance of the WG method for solutions of low
regularity when the interface is fixed to be a Lipschitz continuous
curve\cite{Hou:2010}. On a rectangular domain
$\Omega=[-1,3]\times[-1,1]$, we consider a case where the interface
$\Gamma$ consists of two pieces: $y=2x$ for $x+y>0$ and $y=-x/2$,
for $x+y\le 0$. Thus, the interface $\Gamma$ has a kink at $(0,0)$.
Denote by $\Omega_1$ the region on the left and upper part of
$\Gamma$, and $\Omega_2$ to be the rest of the domain. The
coefficient is given piecewisely by $A_1(x,y)=(xy+2)/5$ in
$\Omega_1$ and $A_2(x,y)=(x^2-y^2+3)/7$ in $\Omega_2$. The
analytical solution $u$ is fixed to be $u=8$ in $\Omega_1$, while in
$\Omega_2$, $v$ is given piecewisely by
$$ v(x,y)= \begin{cases}
\sin(x+y),  \quad \mbox{ if }x+y\le 0,\\
x+y,     \quad \quad \quad         \mbox{ if }x+y>0.
\end{cases}
$$
Other necessary conditions can be derived from the analytical solutions.

Similar to Example 5, the analytical solution is of $C^2$ continuous
but not $C^3$ in this example. The WG method yields uniformly second
and first order of accuracy in $L_\infty$ norm, respectively, for
the solution and its gradient as shown in Table \ref{tab.ex8}. The
kink of the interface $\Gamma$ is clearly seen in both charts of
Fig. \ref{fig.ex8}. In particular, on the coarsest mesh, the
alignment of finite element triangles to two line segments of
$\Gamma$ is obvious. No deterioration in solution occurs around the
interface corner.

\begin{table}[!hb]
\caption{Numerical convergence test for Example 9.}
\label{tab.ex9}
\begin{center}
\begin{tabular}{||c||c|cc|cc||}
\hline\hline
Mesh & $\max \{h\}$ & \multicolumn{2}{c}{Solution}  & \multicolumn{2}{c||}{Gradient}   \\
\cline{3-4} \cline{5-6}
 & & $L_\infty$ error & order & $L_\infty$ error & order \\
\hline\hline
Level 1 & 5.6919e-01 & 1.4326e-02 &        & 3.3514e-02 &   \\ \hline
Level 2 & 2.8459e-01 & 4.4250e-03 & 1.6949 & 2.0051e-02 & 0.7411 \\ \hline
Level 3 & 1.4230e-01 & 1.2350e-03 & 1.8411 & 1.0768e-02 & 0.8968 \\ \hline
Level 4 & 7.1149e-02 & 3.2693e-04 & 1.9175 & 5.5458e-03 & 0.9573 \\ \hline
Level 5 & 3.5574e-02 & 8.5118e-05 & 1.9415 & 2.8004e-03 & 0.9858 \\
\hline\hline
\end{tabular}
\end{center}
\end{table}

\begin{figure}[!hb]
\centering
\begin{tabular}{cc}
  \resizebox{2.45in}{2.1in}{\includegraphics{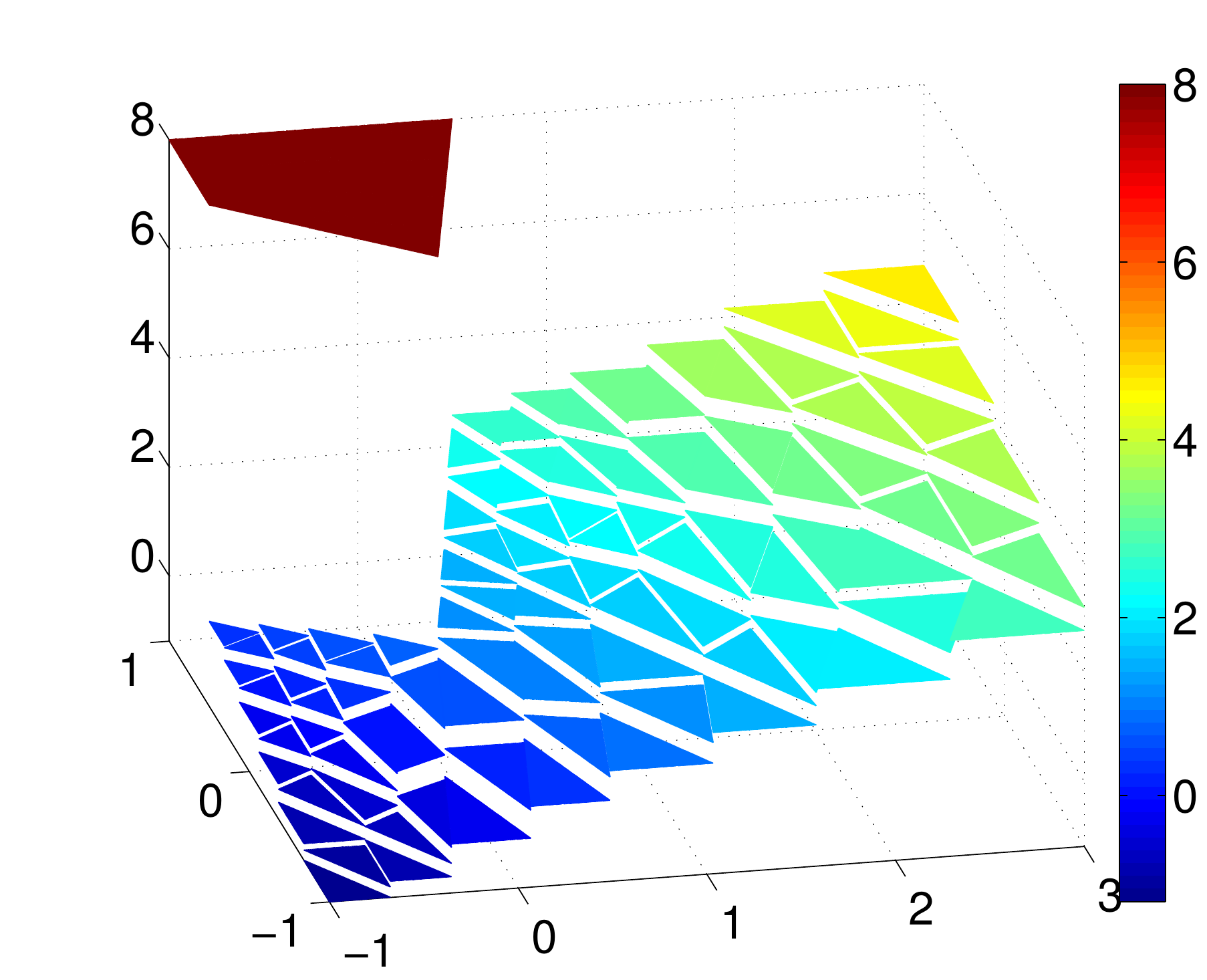}} \quad
  \resizebox{2.45in}{2.1in}{\includegraphics{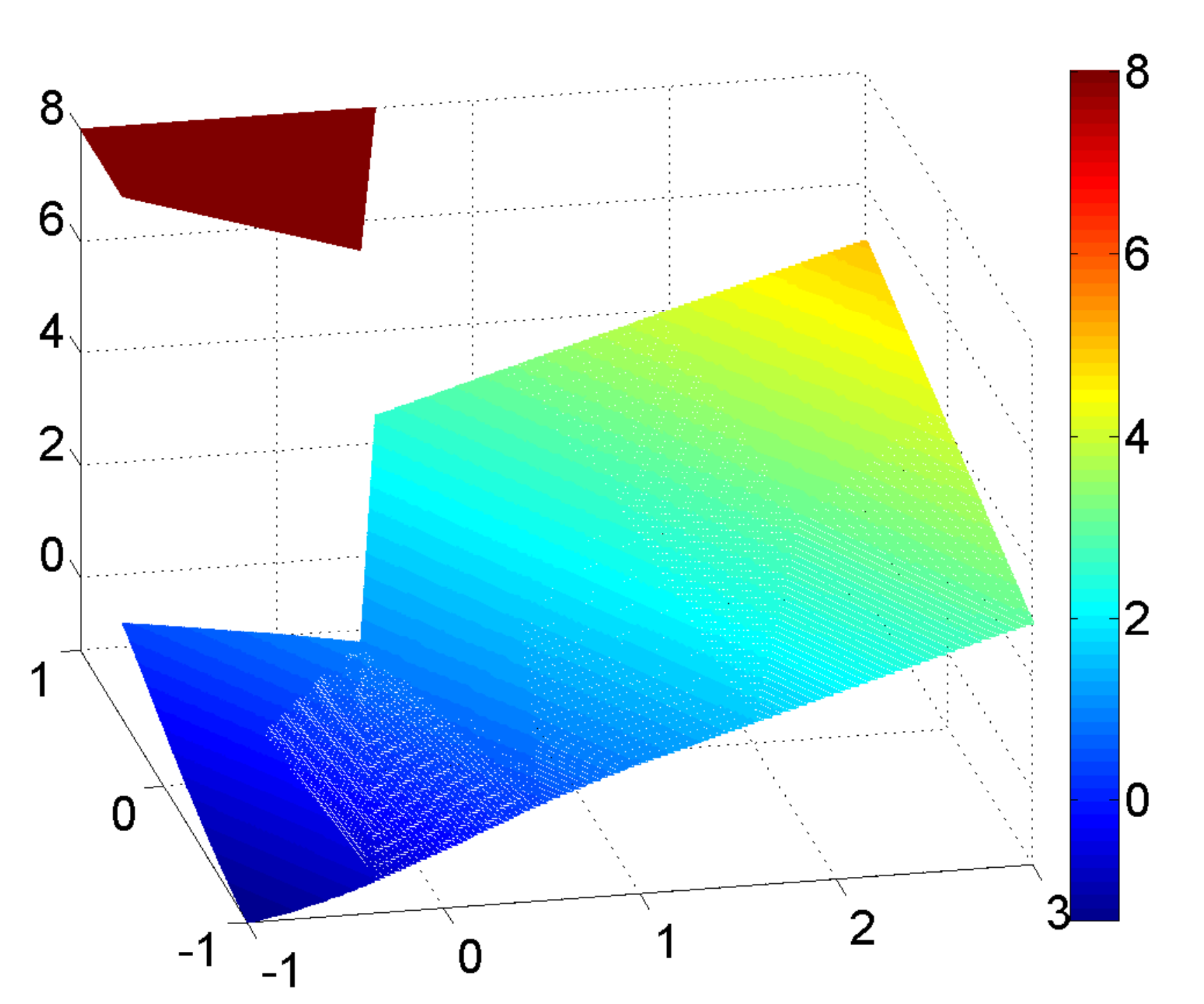}}
\end{tabular}
\caption{WG solutions of Example 9.
Left: Mesh level 1; Right: mesh level 5.}
\label{fig.ex9}
\end{figure}

\medskip

{\bf Example 9.} Consider an elliptic problem with the same
interface and domain geometry as in Example 8. The coefficient
function $A(x,y)$ is defined the same as in Example 8. Also, we fix
the solution to be $u=8$ in $\Omega_1$. But in $\Omega_2$, the
solution is given differently \cite{Hou:2010} by
$$
v(x,y)= \begin{cases}
\sin(x+y)+\cos(x+y),  \quad \mbox{ if }x+y\le 0,\\
x+y+1,     \quad \quad \quad \quad \quad  \quad \quad   \mbox{ if }x+y>0.
\end{cases}
$$
Other necessary conditions can be derived similarly. Like in Example
6, $v(x,y)$ is of $C^1$ continuous but not $C^2$ across the line
$x+y=1$. Similarly, no special numerical treatment is invoked near
the line $x+y=1$ so that this line actually cuts through finite
element triangles, see the left chart of Fig. \ref{fig.ex9}. It can
be seen from Table \ref{tab.ex9} that the WG method converges
uniformly with orders being close to second and first, respectively,
for the solution and its gradient in the $L_\infty$ norm.
Technically speaking, the interface is Lipschitz continuous in this
example, while it is $C^1$ continuous in Example 6. In other words,
the interface regularity is lower here than that in Example 6. However,
since body-fitted triangular meshes are employed in the WG method, the
interface with two simple line segments in the present example is
easier to be resolved by the body-fitted grids than the curved
interface in Example 6. The only issue that could make an impact on
the convergence is the geometrical singularity at the origin. The
last and present examples show that the WG method is very robust in
handling geometrical singularity, or sharp edged corners in general.

\begin{table}[!hb]
\caption{Numerical convergence test for Example 10.}
\label{tab.ex10}
\begin{center}
\begin{tabular}{||c||c|cc|cc||}
\hline\hline
Mesh & $\max \{h\}$ & \multicolumn{2}{c}{Solution}  & \multicolumn{2}{c||}{Gradient}   \\
\cline{3-4} \cline{5-6}
 & & $L_\infty$ error & order & $L_\infty$ error & order \\
\hline\hline
Level 1 & 7.1020e-01 & 4.0630e-02 &        & 7.1308e-02 &  \\ \hline
Level 2 & 3.5510e-01 & 1.1389e-02 & 1.8349 & 4.0849e-02 & 0.8038 \\ \hline
Level 3 & 1.7755e-01 & 3.3569e-03 & 1.7624 & 2.1170e-02 & 0.9483 \\ \hline
Level 4 & 8.8775e-02 & 1.0150e-03 & 1.7256 & 1.0780e-02 & 0.9737 \\ \hline
Level 5 & 4.4387e-02 & 3.1128e-04 & 1.7052 & 5.5740e-03 & 0.9516 \\
\hline\hline
\end{tabular}
\end{center}
\end{table}

\begin{figure}[!hb]
\centering
\begin{tabular}{cc}
  \resizebox{2.45in}{2.1in}{\includegraphics{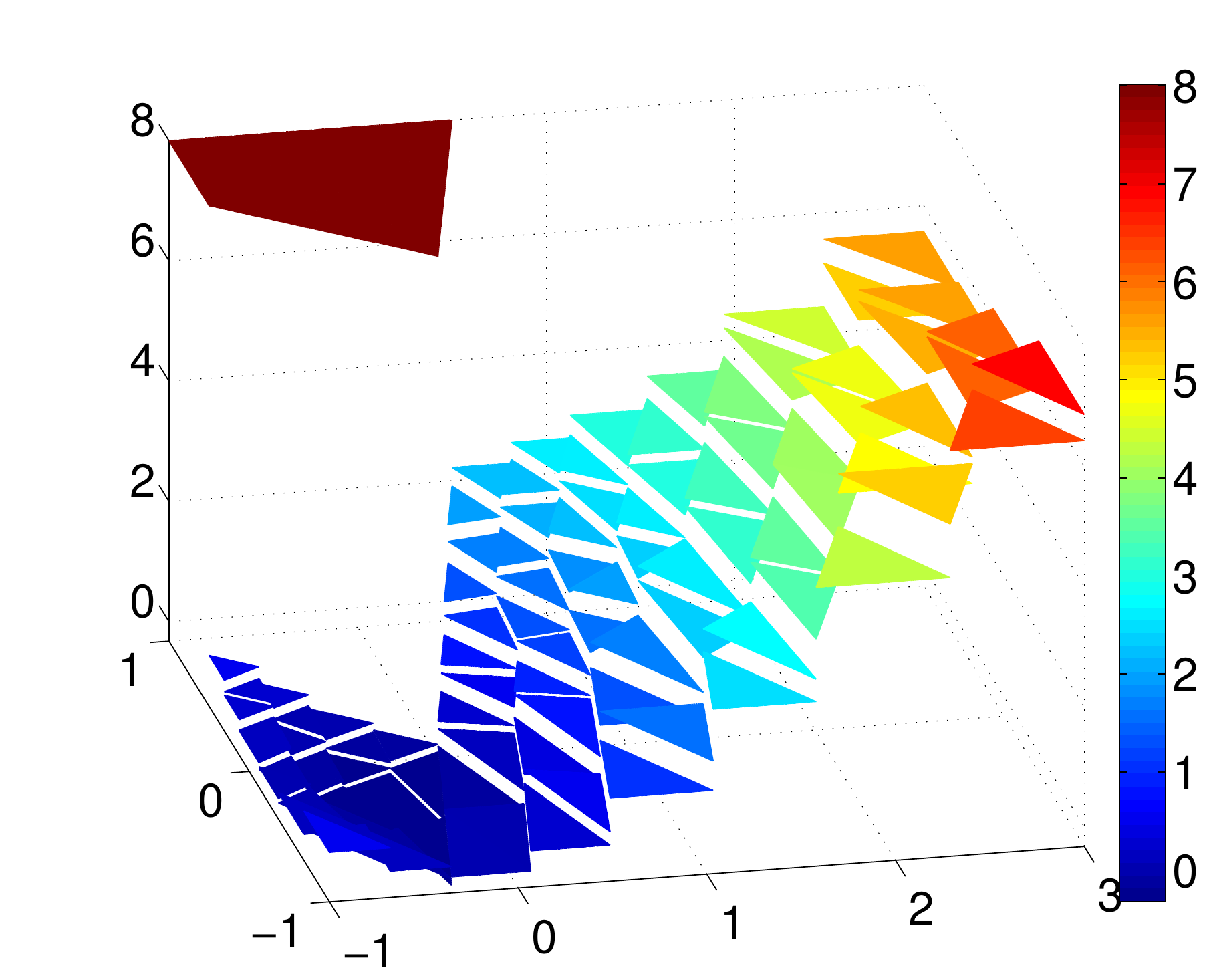}} \quad
  \resizebox{2.45in}{2.1in}{\includegraphics{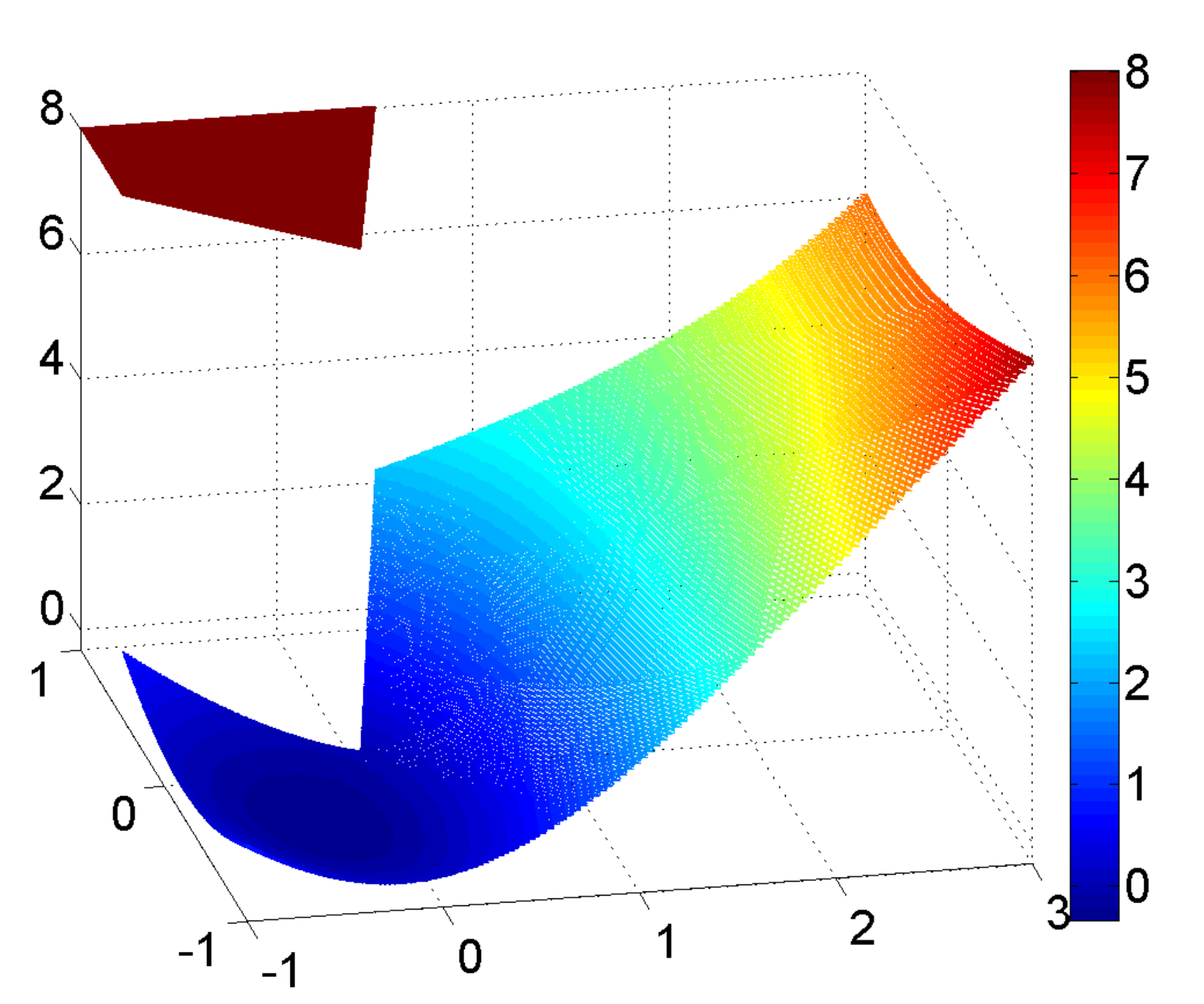}}
\end{tabular}
\caption{WG solutions of Example 10.
Left: Mesh level 1; Right: mesh level 5.}
\label{fig.ex10}
\end{figure}

\medskip

{\bf Example 10.} Consider the same interface and geometry as in
Examples 8 and 9. The coefficient function $A(x,y)$ is now defined
to be $A_1(x,y)=1$ in $\Omega_1$ and $A_2(x,y)=2+\sin(x+y)$ in
$\Omega_2$. The analytical solutions are give as \cite{Hou:2010}
\begin{align*}
u(x,y) & = 8 & \mbox{in } \Omega_1, \\
v(x,y) & = (x^2+y^2)^{5/6}+\sin(x+y) & \mbox{in } \Omega_2.
\end{align*}
Other necessary conditions can be derived from the analytical
solution.

Similar to Example 7, the analytical solution is piecewise $H^2$ in
the present case. The solution $v(x,y)$ has a singularity at $(0,0)$
with blow-up derivatives. The WG method achieves a similar order of
accuracy as that in Example 7. In particular, it can be seen in
Table \ref{tab.ex10} that the $L_\infty$ error in the solution is of
order $1.75$, while it remains to be about $1$ for the gradient
approximation. Both function singularity and geometrical singularity
are presented at the origin in this test; see the WG solutions in
Fig. \ref{fig.ex10}. Nevertheless, the WG method is capable of
delivering decent results at mesh level 1.

\begin{table}[!hb]
\caption{Summary of the overall orders of the WG method
for solutions with low regularities.}
\label{tab.sum}
\begin{center}
\begin{tabular}{||c||cc||cc||}
\hline\hline
Regularity & \multicolumn{2}{c||}{ $\Gamma$ is $C^1$ continuous}  &
\multicolumn{2}{c||}{$\Gamma$ is Lipschitz continuous}   \\
\cline{2-3} \cline{4-5}
& solution &  gradient  & solution & gradient \\
\hline\hline
$C^2$ & 1.8380 & 0.9056 & 1.8650  & 0.9069 \\ \hline
$C^1$ & 1.9523 & 0.9513 & 1.8487  & 0.8953  \\ \hline
$H^2$ & 1.7500 & 0.9558 & 1.7570  & 0.9193  \\
\hline\hline
\end{tabular}
\end{center}
\end{table}
\medskip

Finally, we would like to summarize the WG results from Example 5 to
Example 10 in Table \ref{tab.sum}. In each case, an overall
convergence rate is reported, which is calculated based only on mesh
level 1 and level 5. It can be seen from the $L_\infty$ error of the
gradient that the WG method essentially attains the first order of
accuracy in all test cases. For the $L_\infty$ error of the
solution, we analyze the order of the WG method case by case. When
the solution is $C^2$ continuous, the loss of regularity has no
impact on the order of convergence regardless whether the interface is
$C^1$ continuous or Lipschitz continuous. When the solution is $C^1$
continuous, the WG method achieves a very good order of accuracy for
$C^1$ continuous interface, while such order is slightly reduced
when the interface is Lipschitz continuous. However, the overall
order of convergence for the WG method for the latter case is still
very close to two. Thus, it is fair to say that the WG method
attains a second order of accuracy in $L_\infty$ norm for $C^1$
continuous solutions. On the other hand, the $L_\infty$ error of the
WG method is always around the order of $1.75$ when the solution is
$H^2$ continuous, for both $C^1$ and Lipschitz continuous
interfaces. In other words, even though the geometrical singularity
does not affect the convergence of the WG method, the function
singularity may do. Nevertheless, we note that a $1.75$th order for
solution with low regularities is still some of the best for this
class of challenging problems, to our best knowledge. Moreover,
although not reported in detail in this paper, the WG method achieves
the second order of accuracy in $L_2$ norm for the solution. Therefore,
the present study demonstrates a good robustness of the WG method for
solving elliptic interface problems. Readers are also encouraged to
draw their own conclusions from the results reported in this paper.

\section{Conclusion}\label{Sec:Conclusion}

The present paper presents some of the best results for solving
two-dimensional (2D) elliptic partial differential equations (PDE)
with low solution regularity resulted from modeling nonsmooth
interfaces. The weak Galerkin finite element method (WG-FEM) of Wang
and Ye \cite{JWang:2011} is introduced for this class of problems.
The WG-FEM is a new approach that employs discontinuous functions in
the finite element procedure to gain flexibility in enforcing
boundary and interface conditions. Such a strategy is akin to that
used by the discontinuous Galerkin (DG) methods. However, by
enforcing only weak continuity of variables though well defined
discrete differential operators, the WG-FEM is able to avoid pending
parameters commonly occurred in the DG-FEMs. Like traditional FEMs,
the WG-FEM employs body-fitted element meshes to represent
boundaries and material interfaces. Such an approach is convenient
for handling nonsmooth interfaces or geometric singularities. The
convergence analysis of the present WG-FEM for elliptic interface
problems is given.  The validity of the WG-FEM is further tested
over a large number of benchmark numerical tests with various
complex interface geometries and nonsmooth interfaces. The designed
second order accuracy is confirmed in our numerical experiments for
problems with smooth interfaces and sufficient solution
regularities.

Nonsmooth interfaces, such as sharp edges, cusps, and tips are
ubiquitous both in nature and in engineering devices and structures.
When nonsmooth interfaces are associated with elliptic PDEs, they
result in difficulties in theoretical analysis and in the design of
numerical algorithms. Unfortunately, to tackle the real-world
problems, dealing with nonsmooth interfaces is non-avoidable in
scientific computing. What aggravates the difficulty in error
analysis and in the numerical methods is the low regularity of
solution resulted from nonsmooth interfaces, which is a well-known
natural phenomena called tip-geometry effects in many fields. When
the interface is Lipschitz continuous and solution is $C^1$ or $H^2$
continuous, the best known result in the literature is of first
order convergence in solution and $0.7$ order of convergence in the
gradient \cite{Hou:2010}. It is demonstrated that the present WG-FEM
achieves at least an order of $1.75$ convergence in the solution and
an order of $1$ convergence in the gradient. A numerical study for
high order of WG methods should be conducted for a better
understanding of the method.

\newpage

\end{document}